\newcommand{\Mod}[1]{\ (\mathrm{mod}\ #1)}
\theoremstyle{plain}
\newtheorem{theorem}{Theorem}
\newtheorem{lemma}[theorem]{Lemma}
\newtheorem{corollary}[theorem]{Corollary}
\newtheorem{definition}[theorem]{Definition}
\newtheorem{proposition}[theorem]{Proposition}
\newtheorem{example}[theorem]{Example}
\newtheorem*{proof1}{Proof of Theorem 13}
\begin{document}

\begin{frontmatter}

\title{Dold's Theorem from Viewpoint of\\ Strong Compatibility Graphs}

\author[label1, label2]{Hamid Reza Daneshpajouh}
\address[label1]{School of Mathematics, Institute for Research in Fundamental Sciences (IPM), P.O. Box 19395-5746, Tehran, Iran.}
\address[label2]{Moscow Institute of Physics and Technology, Institutsky lane 9, Dolgoprudny, Moscow region, 141700}



\ead{hr.daneshpajouh@phystech.edu}


\begin{abstract}
Let $G$ be a non-trivial finite group. The well-known Dold's theorem states that:
There is no continuous $G$-equivariant map from an $n$-connected simplicial $G$-complex to a free simplicial $G$-complex of dimension at most $n$. In this paper, we give a new generalization of Dold's theorem, by replacing ``dimension at most $n$" with a sharper combinatorial parameter. Indeed, this parameter is the chromatic number of a new family of graphs, called strong compatibility graphs, associated to the target space. Moreover, in a series of examples, we will see that one can hope to infer much more information from this generalization than ordinary Dold's theorem. In particular, we show that this new parameter is significantly better than the dimension of target space ``for almost all free $\mathbb{Z}_2$-simplicial complex." In addition, some other applications of strong compatibility graphs will be presented as well. In particular, a new way for constructing triangle-free graphs with high chromatic numbers from an n-sphere $\mathbb{S}^n$, and some new results on the limitations of topological methods for determining the chromatic number of graphs will be given.
\end{abstract}

\begin{keyword}
Borsuk-Ulam theorem \sep Chromatic number \sep Dold's theorem \sep Tucker's lemma \sep Compatibility graph 
\end{keyword}

\end{frontmatter}

\section{Introduction}
From the perspective of transformation groups, the famous Borsuk-Ulam theorem states that:
There is no continuous $\mathbb{Z}_2$-equivariant map from the $n$-sphere $\mathbb{S}^{n}$ to the $m$-sphere $\mathbb{S}^{m}$, whenever $n> m$. It is known that the Borsuk-Ulam theorem has a lot of generalizations; see the excellent survey of Steinlein~\cite{Steinlein}. Probably the best generalization of the classical Borsuk-Ulam theorem is Dold's theorem, as it has found various applications in many directions, such as chromatic numbers of hypergraphs~\cite{Daneshpajouh2, mat1, Sar}, topological Tverberg-type results~\cite{Bla}, and fair division problems~\cite{Vu}. Here is the precise statement of Dold's theorem.
\begin{theorem}[Dold's theorem~\cite{Dold}]
Let $G$ be a non-trivial finite group, $\mathcal{K}$ an $n$-connected simplicial $G$-complex, and $\mathcal{L}$ a free simplicial $G$-complex of
dimension at most $n$. Then there is no continuous $G$-equivariant from $||\mathcal{K}||$ to $||\mathcal{L}||$.
\end{theorem}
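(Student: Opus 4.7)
The plan is to exploit the Borel construction together with the fact that the mod-$p$ cohomology of $B\mathbb{Z}_p$ is nonzero in every degree. As a preliminary reduction I would first pass to the case $G=\mathbb{Z}_p$ for some prime $p$: because $G$ is a nontrivial finite group, Cauchy's theorem produces a subgroup $H\cong\mathbb{Z}_p$, and upon restricting the action, $\mathcal{L}$ remains a free $H$-complex of dimension at most $n$, $\mathcal{K}$ remains $n$-connected, and any $G$-equivariant map is in particular $H$-equivariant. So the general statement follows once the prime-cyclic case is established.

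For a $\mathbb{Z}_p$-space $X$ I would work with the Borel construction $X_G:=EG\times_G X$, which fits into a fibration $X\hookrightarrow X_G\twoheadrightarrow BG$, using $\mathbb{F}_p$-coefficients throughout. Two cohomological facts are needed. On the target, because $\mathcal{L}$ is free, the projection $EG\times \mathcal{L}\to\mathcal{L}$ is a $G$-homotopy equivalence, so $\mathcal{L}_G\simeq\mathcal{L}/G$ is a CW complex of dimension at most $n$, and therefore $H^{n+1}(\mathcal{L}_G;\mathbb{F}_p)=0$. On the source, the Serre spectral sequence $E_2^{p,q}=H^p(BG;H^q(\mathcal{K};\mathbb{F}_p))$ of the fibration for $\mathcal{K}$ is concentrated on the row $q=0$ in total degrees $\leq n$, since $\mathcal{K}$ is $n$-connected; checking that no differential $d_r$ with $2\leq r\leq n+1$ can hit the position $(n+1,0)$---its only potential sources $E_r^{n+1-r,\,r-1}$ vanish because $H^{r-1}(\mathcal{K};\mathbb{F}_p)=0$---would show that the edge map
\[
H^{n+1}(BG;\mathbb{F}_p)\;\longrightarrow\;H^{n+1}(\mathcal{K}_G;\mathbb{F}_p)
\]
is injective.

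To conclude, assume for contradiction that a $G$-equivariant map $f\colon\mathcal{K}\to\mathcal{L}$ exists. Applying $EG\times_G(-)$ yields $f_G\colon\mathcal{K}_G\to\mathcal{L}_G$ over $BG$, so in cohomology the edge map above factors as $H^{n+1}(BG;\mathbb{F}_p)\to H^{n+1}(\mathcal{L}_G;\mathbb{F}_p)\to H^{n+1}(\mathcal{K}_G;\mathbb{F}_p)$, with middle term zero. Choosing any nonzero $\omega\in H^{n+1}(BG;\mathbb{F}_p)$---which exists because $H^{*}(B\mathbb{Z}_p;\mathbb{F}_p)$ is nontrivial in every degree---gives the desired contradiction. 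The main technical obstacle is the spectral-sequence bookkeeping needed to establish injectivity of the edge homomorphism at degree $n+1$; the full strength of $n$-connectedness is used here (merely $(n-1)$-connectedness would leave a potentially nonzero transgression $d_{n+1}\colon E_{n+1}^{0,n}\to E_{n+1}^{n+1,0}$), and this is the one place where the connectivity hypothesis enters in an essential way.
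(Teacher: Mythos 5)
Your proof is correct, but it takes a genuinely different route from the paper. The paper never proves Dold's theorem from scratch: it cites the statement and later rederives it as a corollary of its generalization (Theorem 9(a)), whose engine is Theorem 8 --- a proper $m$-coloring of the strong compatibility graph $\widetilde{C}_{P(\mathcal{L})}$ of the face poset is converted, by choosing orbit representatives, into a simplicial $G$-equivariant map into $\Delta(G\times[m])$, an $\mathbb{E}_{m-1}G$ space --- combined with Lemma 7 (coloring each poset element by the length of the longest chain above it gives $\chi(\widetilde{C}_P)\leq\dim P+1$) and the standard properties of $ind_G$ quoted from Matou\v{s}ek as Proposition 3; the deep topological input is thus outsourced to Proposition 3(3). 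You instead argue directly cohomologically: Cauchy's theorem reduces to $G=\mathbb{Z}_p$ (this step is essential, since $H^*(BG;\mathbb{F}_p)$ for general finite $G$ need not be nonzero in all degrees), then freeness of $\mathcal{L}$ gives $\mathcal{L}_G\simeq\mathcal{L}/G$ of dimension $\leq n$, while $n$-connectedness of $\mathcal{K}$ kills every differential $d_r\colon E_r^{n+1-r,\,r-1}\to E_r^{n+1,0}$, $2\leq r\leq n+1$, in the Serre spectral sequence of $\mathcal{K}_G\to BG$, so $\pi^*\colon H^{n+1}(B\mathbb{Z}_p;\mathbb{F}_p)\to H^{n+1}(\mathcal{K}_G;\mathbb{F}_p)$ is injective and cannot factor through $H^{n+1}(\mathcal{L}_G;\mathbb{F}_p)=0$; your spectral-sequence bookkeeping is right (nothing maps out of $(n+1,0)$ either, since the targets have negative fiber degree), and the only cosmetic gap is that the $E_2$-page should a priori carry local coefficients, which is harmless here because $\mathcal{K}$ is connected (so the $q=0$ row has trivial $\pi_1(BG)$-action) and the rows $1\leq q\leq n$ vanish outright. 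The trade-off: your argument is self-contained modulo standard algebraic topology, needs no freeness or simpliciality on the source beyond a $G$-CW structure, and makes transparent where the prime and the freeness of $\mathcal{L}$ enter; the paper's machinery is heavier to set up but is precisely what yields its refinement of Dold's theorem, replacing $\dim(\mathcal{L})+1$ by the potentially much smaller parameter $\chi(\widetilde{C}_{P(\mathcal{L})})$, which your cohomological argument does not see.
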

The Borsuk-Ulam theorem has also a handy combinatorial consequence, called octahedral Tucker's lemma~\cite{mat}. Several extensions of the octahedral Tucker lemma with fascinating applications in various area such as graph colorings, and fair division problems are known. We refer the interested reader to~\cite{De, Fan, Ziegler} for generalizations, and~\cite{Ali, Daneshpajouh3, De, mat, pal} for applications. Not long ago, a new generalization of octahedral Tucker's lemma, called $G$-Tucker's lemma, was introduced in~\cite{Daneshpajouh}. Moreover, as an application of that generalization, a new method for constructing graphs with high chromatic number and small clique number was given. To recall the lemma, we need to make some definitions and conventions. 

From now on, $G$ stands for a non-trivial finite group with $0\notin G$. Furthermore, its identity element will be denoted by $e$, and we define $g\cdot 0= 0$ for all $g\in G$.  Consider the $G$-poset(for definition, see the preliminaries section) $G\times\{1,\ldots , n+1\}$ with natural $G$-action, $h\cdot (g, i)\to (hg, i)$, and the order defined by $(h, x)\prec_1 (g, y)$ if $x < y$ (in $\mathbb{N}$). Also, let ${\left(G\cup\{0\}\right)}^n\setminus\{(0,\ldots , 0)\}$ be the $G$-poset whose action is $g\cdot (x_1, \ldots, x_n) = (g\cdot x_1, \ldots, g\cdot x_n)$, and the order relation is given by:
$$x=(x_1, \ldots, x_n)\preceq y=(y_1, \ldots, y_n),$$
if for every $i\in\{1, \ldots, n\}$, $x_i\neq 0$ implies $x_i=y_i$. We are now in a position to recall $G$-Tucker's lemma.
\begin{lemma}[$G$-Tucker's lemma~\cite{Daneshpajouh}]
Suppose that $n$ is a positive integers, $G$ is a non-trivial finite group, and 
$$\lambda : {\left(G\cup\{0\}\right)
}^n\setminus\{(0,\ldots , 0)\}\to G\times\{1, \ldots, (n-1)\}$$
is a map such that $\lambda (g\cdot x)= g\cdot\lambda(x)$ for all $g\in G$ and all $x\in{\left(G\cup\{0\}\right)
}^n\setminus\{(0,\ldots , 0)\}$. Then there exist two elements $x, y\in{\left(G\cup\{0\}\right)
}^n\setminus\{(0,\ldots , 0)\}$, and $e\neq g\in G$  such that $x\prec y$ and $\lambda (x) = g\cdot\lambda (y)$.
\end{lemma}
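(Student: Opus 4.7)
My plan is to deduce the lemma from Dold's theorem applied to the order complexes of the two posets appearing in the statement. First I would translate the forbidden configuration into a more convenient form. Writing $\lambda(z) = (\lambda_1(z),\lambda_2(z))$ with $\lambda_1(z)\in G$ and $\lambda_2(z)\in\{1,\ldots,n-1\}$, the condition ``$\lambda(x) = g\cdot\lambda(y)$ for some $e\neq g\in G$'' is equivalent to ``$\lambda_2(x) = \lambda_2(y)$ and $\lambda(x)\neq \lambda(y)$'', because $G$ acts only on the first coordinate and that action is free. Hence, if the conclusion of the lemma fails, then for every comparable pair $x\prec y$ the images $\lambda(x),\lambda(y)$ either coincide or lie at different levels, so in either case they are $\prec_1$-comparable.

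Next I would identify the underlying topology of the relevant order complexes. The poset $(G\cup\{0\})^n\setminus\{(0,\ldots,0)\}$ is precisely the face poset of the $n$-fold join $X^{*n}$, where $X$ denotes the $|G|$-point discrete $G$-set with the left-regular action; consequently its order complex $\Delta_1$ is $G$-homeomorphic to $X^{*n}$. Since $|G|\geq 2$, a standard connectivity count for joins shows $X^{*n}$ is $(n-2)$-connected, and $G$ acts freely on it (the stabilizer of any simplex is the stabilizer of its top element, which is a tuple in $G^n$ and hence trivial). Analogously, $\Delta_2$, the order complex of $G\times\{1,\ldots,n-1\}$, is $G$-homeomorphic to the $(n-1)$-fold join $X^{*(n-1)}$, which is a free simplicial $G$-complex of dimension $n-2$.

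Finally I would promote $\lambda$ to a simplicial $G$-map $\bar\lambda:\Delta_1\to\Delta_2$. Given any chain $x_0\prec x_1\prec\cdots\prec x_k$ of $\Delta_1$, the reformulation above guarantees that any two images $\lambda(x_i),\lambda(x_j)$ either coincide or have different second coordinates; hence the distinct images form a chain in $G\times\{1,\ldots,n-1\}$, i.e., a simplex of $\Delta_2$. Equivariance of $\bar\lambda$ is inherited from that of $\lambda$, and so $\bar\lambda$ is a continuous $G$-equivariant map from an $(n-2)$-connected simplicial $G$-complex to a free simplicial $G$-complex of dimension at most $n-2$, contradicting Dold's theorem with ``$n$'' replaced by $n-2$. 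The step most likely to require care is the chain-to-chain verification: this is the unique place where the ``height-one'' structure of the target poset interacts with the combinatorial reformulation of the hypothesis, and it is essential that the argument makes $\bar\lambda$ simplicial without passing to a further subdivision.
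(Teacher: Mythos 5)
Your proof is correct, but it routes through a different key theorem than the paper does. The paper derives $G$-Tucker's lemma as a corollary of part (b) of its generalized Dold theorem (Theorem 9), applied with $P=(G\cup\{0\})^n\setminus\{(0,\ldots,0)\}$ and $Q=G\times[n-1]$; the three facts it lists for this purpose are exactly your three ingredients, except that in place of your dimension count $\dim\Delta(G\times[n-1])=n-2$ it computes the chromatic number of the strong compatibility graph, showing $\widetilde{C}_{G\times[n-1]}$ is the complete $(n-1)$-partite graph $K_{|G|,\ldots,|G|}$ and hence $\chi(\widetilde{C}_{G\times[n-1]})=n-1$; part (b) then directly produces a comparable pair with incomparable images, which the comparability criterion (your reformulation that $\lambda(x)=g\cdot\lambda(y)$ with $g\neq e$ means equal second coordinates and distinct images) converts into the stated conclusion. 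You bypass the strong-compatibility machinery (Theorem 8) entirely and quote classical Dold with the dimension bound; since for this particular target the two parameters coincide (here $\chi(\widetilde{C}_{G\times[n-1]})=\dim\Delta(G\times[n-1])+1$, the extremal case of Lemma 7), nothing is lost, and your argument is the more elementary and self-contained one — your chain-to-chain verification is the same induced-simplicial-map step that appears inside the paper's proof of Theorem 9(b), so your proof is essentially what the paper's derivation unwinds to in this special case. What the paper's detour buys is expository: it demonstrates that the new parameter subsumes $G$-Tucker's lemma, which is the point of the section. One small slip in your write-up, harmless here: the top element of a chain in $P$ need not be a tuple in $G^n$ (zero coordinates are allowed); freeness of the action on $P$ follows instead because every element of $P$ has some nonzero coordinate $x_i\in G$, and $g\cdot x_i=x_i$ forces $g=e$ — and in any case Dold's theorem requires freeness only of the target, so the freeness of the source is superfluous to your argument.
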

It is worth mentioning that octahedral Tucker's lemma can be simply derived from $G$-Tucker's lemma by putting $G=\mathbb{Z}_2$.
\subsection{Aims and Objectives}  
The main purpose of this paper is to present a new generalization of Dold's theorem by replacing the dimension of the target space with a sharper combinatorial parameter. In order to achieve this goal, first a new family of graphs, called strong compatibility graphs, will be introduced. In fact, a strong compatibility graph $\widetilde{C}_P$ is a graph associated to a $G$-poset $P$; a partially ordered set equipped with a group action $G$. Then, we will establish a connection between a topological parameter associated to $P$, the chromatic number of its strong compatibility graph, and its dimension. Consequently, this will lead to our desired result. Later, in a series of examples we will see that how much this generalization can be stronger than ordinary Dold's theorem. In particular, it will be shown that this new version of Dold's theorem is significantly better than ordinary Dold's theorem for ``almost all free $\mathbb{Z}_2$-simplicial complex." A generalization of $G$-Tucker's lemma will be presented as well. In addition, some other applications of strong compatibility graphs, such as constructing triangle-free graphs with high chromatic numbers from the $n$-sphere $\mathbb{S}^n$, and some new results on the limitations of topological methods for determining the chromatic number of graphs will be given.

\section{Preliminaries}
In this section, we collect some definitions and auxiliary results needed later in the paper. We assume that the reader is familiar with standard definitions and concepts of simplicial complexes.
For background on simplicial complexes we refer the reader to~\cite{matousek2008using}. Throughout the paper, the term ``simplicial complex" always means abstract simplicial complex, the geometric realization of a simplicial complex $\mathcal{K}$ is denoted by $||\mathcal{K}||$, and the symbols $[n]$ stands for the set $\{1,\ldots, n\}$.
\vspace{.2cm}

\textbf{Graphs:} In this paper, all graphs are finite, simple and undirected. The chromatic number $\chi (H)$ of a graph $H$ is the smallest number of colors needed to color the vertices of $H$ so that no two adjacent vertices share the same color.
\vspace{.2cm}

\textbf{Posets and $G$-posets: }A pair of elements $a, b$ of a partially ordered set $(P, \preceq)$ (poset for short)  are called comparable if either $a\preceq b$ or $b\preceq a$. A subset of a poset in which each two elements are comparable is called a chain. To any poset $(P, \preceq )$ we associate its order complex $\Delta (P)$, whose simplices are given by chains in $P$. A $G$-poset is a poset together with a $G$-action on its elements that preserves the partial order, i.e, $x\preceq y \Rightarrow g\cdot x\preceq g\cdot y$. A $G$-poset $P$ is called free $G$-poset, if for all $x$ in $X$, $g\cdot x=x$ implies $g=e$. One can see that, if $P$ is a free $G$-poset then its order complex $\Delta (P)$ is a free simplicial $G$-complex.
\vspace{.2cm}

\textbf{Connectivity and $G$-index: }Let $k\geq 0$ be a non-negative integer. A topological space $X$ is $k$-connected if its homotopy groups $\pi_0(X), \pi_1(X), \ldots, \pi_{k}(X)$ are all trivial. Also, for convenience, we make conventions that $(-1)$-connected means nonempty, and the empty set is $(-\infty)$-connected. The largest $k$, if it exists, that $X$ is $k$-connected is called the connectivity of $X$, and denoted by $conn(X)$. A simplicial complex $\mathcal{K}$ is called $k$-connected if $||\mathcal{K}||$ is $k$-connected. Similarly, a poset $P$ is called $k$-connected if $||\Delta (P)||$ is $k$-connected.

For an integer $n\geq 0$ and a group $G$, an $\mathbb{E}_nG$ space is the geometric realization of an $(n-1)$-connected free $n$-dimensional simplicial $G$-complex. For a $G$-space $X$, we define
$$ind_{G} X = \min\{n |\,\text{there is a continuous}\,G\text{-equivariant map}\, X\to \mathbb{E}_nG\}.$$
It is worth pointing out that the value of $ind_{G} X$ is independent of which $\mathbb{E}_nG$ space is chosen, because any of them $G$-equivariantly maps into any other, see~\cite[section 6.2]{matousek2008using} for details. For a concrete example, one can see that the geometric realization of order complex of $(G\times [n+1], \preceq_1)$, $||\Delta (G\times [n+1])||$, is an example of $\mathbb{E}_nG$ space. It is worth noting that $\Delta(G\times [n+1])$ is the standard $(n+1)$-fold join $\underbrace{G*G*\dots *G}_{n+1}$. Let us finish this section by listing some basic properties of $ind_{G} X$. 
\begin{proposition}[\cite{matousek2008using}]
Let $G$ be a non-trivial finite group, and let $X, Y$ be $G$-spaces.
\begin{enumerate}
    \item If there is a continuous $G$-equivariant map from $X$ to $Y$, we have $ind_{G} X\leq ind_{G} Y$.
    \item For every $\mathbb{E}_nG$ space, $ind_{G} \mathbb{E}_nG = n$.
    \item If $X$ is $k$-connected, then $k+1\leq ind_{G} X$.
\end{enumerate}
\end{proposition}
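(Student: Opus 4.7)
The plan is to dispatch the three parts in the order $(1) \to (3) \to (2)$, since essentially all the topological content is concentrated in $(3)$, which is a direct reformulation of Dold's theorem. I would begin by recording one preliminary fact implicit in the definition, namely that $ind_G X$ does not depend on which specific $\mathbb{E}_n G$ space is chosen: given two $(n-1)$-connected free $n$-dimensional simplicial $G$-complexes $L_1$ and $L_2$, a $G$-equivariant map $L_1 \to L_2$ can be built by induction on $G$-skeleta, using freeness to extend across orbits and $(n-1)$-connectedness of $L_2$ to kill the cohomological obstructions through dimension $n$. With this in hand, part $(1)$ is pure composition: if $f : X \to Y$ is $G$-equivariant and $g : Y \to \mathbb{E}_n G$ witnesses $ind_G Y = n$, then $g \circ f$ is a $G$-equivariant map $X \to \mathbb{E}_n G$, so $ind_G X \leq n$.

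For part $(3)$ I would argue by contradiction. If $X$ is $k$-connected but $ind_G X \leq k$, then by definition there is a $G$-equivariant map $X \to \mathbb{E}_k G$; but $\mathbb{E}_k G$ is a free simplicial $G$-complex of dimension $k$ and $X$ is $k$-connected, which directly contradicts Theorem~1 applied with $n = k$. Hence $ind_G X \geq k+1$.

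Part $(2)$ then falls out formally. The identity on $\mathbb{E}_n G$ is $G$-equivariant, so $ind_G \mathbb{E}_n G \leq n$; and since $\mathbb{E}_n G$ is $(n-1)$-connected by definition, $(3)$ supplies the matching lower bound $n$. The hard part is entirely outsourced to Theorem~1: absent Dold's theorem, one would have to establish $(3)$ independently, for instance through equivariant obstruction theory on the skeleta of $X$ and $\mathbb{E}_k G$, or through a combinatorial $G$-Tucker-type argument along the lines of the lemma already recorded in the introduction. A secondary technical point to be careful about is the well-definedness preliminary, where one must verify that the obstruction groups genuinely vanish in the relevant range given only freeness of the $G$-action and $(n-1)$-connectivity of the target; this is the only place where more than formal manipulation is needed outside of the appeal to Theorem~1.
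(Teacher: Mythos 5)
The paper itself offers no proof of this proposition; it is quoted from \cite{matousek2008using}, so your attempt can only be measured against the standard treatment there. Your overall architecture is sound: part (1) by composition, the upper bound in (2) by the identity map, the well-definedness preliminary by equivariant skeletal extension into an $(n-1)$-connected target (this is exactly the standard extension lemma), and all remaining content concentrated in a Dold-type nonexistence statement. Deriving the proposition from Theorem 1 is also not circular within this paper, since Theorem 1 is quoted with an independent proof in Dold's original article --- though note the mild inversion this creates: the cited source proves these index properties first and deduces Dold's theorem, whereas you outsource everything to Dold, so the paper's later ``generalization of Dold's theorem'' (which invokes this proposition) would then logically rest on Dold's theorem itself.

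There is, however, one genuine gap, in part (3). Theorem 1 as stated concerns a simplicial $G$-complex $\mathcal{K}$ and rules out $G$-maps $\|\mathcal{K}\|\to\|\mathcal{L}\|$; but in the proposition $X$ is an arbitrary $G$-space, so ``Theorem 1 applied with $n=k$'' does not apply to your hypothetical map $X\to\mathbb{E}_kG$ --- you would need either Dold's original space-level theorem (any $n$-connected $G$-space into a paracompact free $G$-space of dimension at most $n$) or a $G$-CW approximation argument you have not supplied. The clean repair, which is also how \cite{matousek2008using} organizes matters, is to reverse your dependency between (2) and (3): prove (2) first, where Theorem 1 applies verbatim because both spaces are simplicial ($\mathbb{E}_nG$ is the realization of an $(n-1)$-connected simplicial $G$-complex, and $\mathbb{E}_{n-1}G$ of a free one of dimension $n-1$); then obtain (3) for an arbitrary $k$-connected $G$-space $X$ from the extension lemma you already recorded, by building a $G$-map $\mathbb{E}_{k+1}G\to X$ inductively on skeleta (freeness of the source to extend orbit by orbit, $k$-connectivity of $X$ to extend over cells of dimension at most $k+1$), whence $k+1=ind_{G}\,\mathbb{E}_{k+1}G\leq ind_{G}\,X$ by (1) and (2). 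A minor point in the same vein: $ind_{G}\,X\leq k$ yields by definition a $G$-map into $\mathbb{E}_mG$ for some $m\leq k$, and one needs a further $G$-map $\mathbb{E}_mG\to\mathbb{E}_kG$ (again the extension lemma, since $\dim\mathbb{E}_mG=m\leq k$ and $\mathbb{E}_kG$ is $(k-1)$-connected) before speaking of a map into $\mathbb{E}_kG$; your preliminary covers this in spirit, but it should be said explicitly.
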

\section{Strong compatibility graphs and Dold's Theorem}
To propose a new method for finding topological lower bounds for the chromatic numbers of graphs, compatibility graphs were introduced in~\cite{Daneshpajouh1}. Later, some other applications of this new family of graphs, such as a new proof of the well-known Kneser conjecture, and a new way of constructing graphs with high chromatic numbers and small clique numbers, have been found~\cite{Daneshpajouh}. Moreover, in order to attack to a generalization of the famous Hedetniemi conjecture, a new version of this concept has been introduced for hypergraphs. To see this and some other applications, see~\cite{Daneshpajouh2}. We should emphasize that some variant of compatibility graphs, for $G=\mathbb{Z}_2$, were defined before by several authors~\cite{csorba2007homotopy, walker1983graphs, vzivaljevic2005wi}. Let us begin by recalling compatibility graphs from~\cite{Daneshpajouh1}.
\begin{definition}[Compatibility graph]
Let $P$ be a $G$-poset. The compatibility graph of $P$, denoted by $C_P$, is the graph $C_P$ with vertex set $P$, and two elements $x, y\in P$ are adjacent if there is an element $g\in G\setminus\{e\}$ such that $x$ and $g\cdot y$ are comparable in $P$.
\end{definition}
 It is worth noting that the main idea of this paper was inspired from the following theorem.
\begin{theorem}[\cite{Daneshpajouh1}]
If $P$ is a finite free $G$-poset, then
$$ind_{G} ||\Delta (P)||+ |G|\leq\chi\left(C_P\right).$$
\end{theorem}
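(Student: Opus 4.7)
Fix a proper coloring $c: P \to [\chi]$ of $C_P$, where $\chi := \chi(C_P)$. My aim is to exhibit a $G$-equivariant simplicial map $\Phi: \Delta(P) \to \Delta(G \times [\chi - |G| + 1])$; since the target realises an $\mathbb{E}_{\chi - |G|} G$ space, parts (1) and (2) of the Proposition of the preliminaries will then force $ind_G ||\Delta(P)|| \leq \chi - |G|$, which is the desired inequality.

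\textbf{Preliminary structural facts.} I would first record two consequences of freeness. (a) For $g \neq e$, the elements $x$ and $g \cdot x$ are never comparable: otherwise, iterating the order-preserving action of $g$ produces $x \prec g x \prec g^2 x \prec \cdots \prec g^{|g|} x = x$, contradicting irreflexivity. In particular, every chain of $P$ meets each orbit in at most one element. (b) Two distinct elements $x$ and $y = h \cdot x$ of the same orbit are adjacent in $C_P$: taking $g = h^{-1} \neq e$ in the definition yields $g \cdot y = x$, which is (reflexively) comparable with $x$. Consequently the coloring uses $|G|$ pairwise distinct colors on each orbit, i.e.\ $|c(O)| = |G|$.

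\textbf{Building the map.} Write $\Phi(x) = (\alpha(x), \beta(x))$; the resulting vertex map is a simplicial $G$-map precisely when $\alpha: P \to G$ is equivariant, $\beta: P \to [\chi - |G| + 1]$ is $G$-invariant, and along every chain $x_1 \prec \cdots \prec x_k$ of $P$ one has $\alpha(x_i) = \alpha(x_j)$ whenever $\beta(x_i) = \beta(x_j)$ (so that the image is a genuine simplex of the target join). The minimum color $\beta(O) := \min c(O)$ is a natural candidate: it is orbit-invariant, and it lands in $[\chi - |G| + 1]$ because the minimum of a $|G|$-subset of $[\chi]$ is at most $\chi - |G| + 1$. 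For $\alpha$ one chooses an orbit representative (e.g.\ the element realising $\min c(O)$) and declares $\alpha(h \cdot x_O) := h$, which is equivariant by construction. The heart of the argument is then to check that whenever two orbits joined by a chain of $P$ share the same $\beta$-value, the elements of the chain also share the same $\alpha$-value, so the potentially offending simplex collapses to a single vertex of the target join.

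\textbf{Main obstacle.} The subtle point is that membership in $C_P$ is not forced by bare comparability: the definition only activates for pairs $(x, g \cdot y)$ with $g \neq e$. Hence the condition $x \prec y$ alone does not rule out $c(x) = c(y)$, and chains in $P$ can carry repeated colors. The delicate step is to show that a proper coloring of $C_P$ nevertheless constrains the orbit-wise bijections $c|_O : O \to c(O)$ finely enough that the pair $(\alpha, \beta)$ above (or a mild refinement of the choice of orbit representatives) satisfies the join condition. Once that bookkeeping is in place, affine extension of $\Phi$ gives a $G$-equivariant continuous map $||\Delta(P)|| \to \mathbb{E}_{\chi - |G|} G$, and the Proposition yields $ind_G ||\Delta(P)|| \leq ind_G \mathbb{E}_{\chi - |G|} G = \chi - |G|$, completing the proof.
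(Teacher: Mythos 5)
Your setup is sound and is, in essence, the method this paper itself uses: the statement is only cited here from~\cite{Daneshpajouh1}, but the paper's proof of Theorem 8 constructs exactly this kind of equivariant simplicial map $\lambda(g\cdot x')=(g,c(x'))$ into $\Delta(G\times[m])$ via orbit representatives, and your two structural facts --- comparable elements never share an orbit, and each orbit is a $|G|$-clique of $C_P$, so that $\beta(x)=\min c([x])$ lands in $[\chi-|G|+1]$ --- are precisely the extra ingredients that convert the ``$+1$'' of Theorem 8 into the ``$+|G|$'' claimed here. The problem is that you stop at the decisive point: you call the simpliciality check ``the delicate step,'' speculate that it may require ``a mild refinement of the choice of orbit representatives,'' and never carry it out. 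As written, the proposal is a correct plan plus an acknowledged obstacle, not a proof.

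The good news is that the obstacle is illusory: with your own choices (representative $x_O$ the element realising $\min c([x])$, $\alpha(h\cdot x_O)=h$, $\beta=\min c([\,\cdot\,])$) the check takes three lines and no refinement is needed. Suppose $x\prec y$ and $\beta(x)=\beta(y)$. By your fact (a) the orbits $[x]$ and $[y]$ are distinct, so $x_O\neq y_{O'}$, and freeness gives unique $g,h\in G$ with $x=g\cdot x_O$ and $y=h\cdot y_{O'}$. If $g\neq h$, acting by $g^{-1}$ yields $x_O\prec (g^{-1}h)\cdot y_{O'}$ with $g^{-1}h\neq e$, so $x_O$ and $y_{O'}$ are adjacent in $C_P$; note that for $C_P$, unlike $\widetilde{C}_P$, no extra condition $y_{O'}\notin[x_O]$ has to be verified, so this case is actually \emph{easier} than the corresponding step in the paper's Theorem 8. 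But each representative realises its orbit's minimum color, so $c(x_O)=\beta(x)=\beta(y)=c(y_{O'})$, contradicting properness of $c$. Hence $g=h$, equal $\beta$-values along a chain force equal image vertices, distinct $\beta$-values give distinct second coordinates, and any such set is a chain in $(G\times[\chi-|G|+1],\preceq_1)$; the affine extension is then the desired equivariant map into an $\mathbb{E}_{\chi-|G|}G$ space, and Proposition 3 finishes the argument exactly as you indicated. Your worry that $x\prec y$ alone does not force $c(x)\neq c(y)$ is real but harmless: the coloring is never applied to $x$ and $y$ themselves, only to the representatives, where the nontrivial element $g^{-1}h$ supplies the adjacency.
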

Actually, above theorem shows us a connection between the connectivity of a $G$-poset and the chromatic number of its compatibility graph, as for any $G$-space we always have $Ind_{G} X\geq conn(X)+1$. So it was natural to think that there might be a relation between Dold's theorem and above statement. At first we thought we could replace ``dimension of at most $n$", in the stament of Dold's theorem,  with the chromatic number of the compatibility graph of a suitable $G$-poset. But, there were two issues with that: One is that, the difference between $\chi\left(C_P\right)$ and $ind_{G}||P||$ becomes larger, as the size of $G$ increased. The other one is that, it can also be an arbitrary large gap between the dimension of a $G$-poset and the chromatic number of its compatibility graph. For example, the compatibility graph of $(G\times [n], \preceq_1)$, $C_{G\times [n]}$, is isomorphic with the complete graph $K_{n|G|}$. Thus
$\chi\left(C_{G\times [n]}\right)= n|G|$,
which can be arbitrary larger than $\dim (G\times [n])= n-1$. Therefore, for our purpose, a new version of the compatibility graph is needed. Let us define strong compatibility graph as follows.
\begin{definition}[Strong compatibility graph]
Let $P$ be a $G$-poset. The strong compatibility graph of $P$, denoted by $\widetilde{C}_P$, is the graph $\widetilde{C}_P$ with vertex set $P$, and two elements $x, y\in P$ are adjacent if there is an element $g\in G\setminus\{e\}$ such that $x$ and $g\cdot y$ are comparable in $P$ and $y\notin [x]$, where $[x]=\{g\cdot x : g\in G\}$.
\end{definition}
Despite of compatibility graphs, in the next lemma, we will see that the chromatic number of strong compatibility graph of a finite $G$-poset has a good connection to its dimension.
\begin{lemma}
If $P$ is a finite $G$-poset, then
$$\chi\left(\widetilde{C}_P\right)\leq\dim(P)+1.$$
\end{lemma}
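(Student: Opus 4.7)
The plan is to construct an explicit proper coloring of $\widetilde{C}_P$ using only $\dim(P)+1$ colors, via the natural rank function on the poset.

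First I would define, for every $x\in P$, the quantity $c(x)$ to be the length (number of elements) of a longest chain in $P$ having $x$ as its maximum element. Since any such chain is a simplex of the order complex $\Delta(P)$, we have $c(x)\in\{1,2,\ldots,\dim(P)+1\}$, so $c$ is a candidate coloring with the right number of colors.

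The key observation to prepare is that $c$ is constant on $G$-orbits: because the $G$-action preserves the partial order on $P$, any chain $a_1\prec a_2\prec\cdots\prec a_k=x$ is carried by $g$ to a chain $g\cdot a_1\prec\cdots\prec g\cdot a_k=g\cdot x$ of the same length, and conversely using $g^{-1}$. Hence $c(g\cdot x)=c(x)$ for every $g\in G$.

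Next I would verify that $c$ is proper on $\widetilde{C}_P$. Suppose $x,y\in P$ are adjacent, so there exists $g\in G\setminus\{e\}$ with $x$ and $g\cdot y$ comparable and $y\notin[x]$. The role of the condition $y\notin[x]$ is to force $x\neq g\cdot y$ (otherwise $y=g^{-1}\cdot x\in[x]$, contradiction), so $x$ and $g\cdot y$ are strictly comparable. If $x\prec g\cdot y$, then prepending a longest chain ending at $x$ with $g\cdot y$ gives $c(g\cdot y)\geq c(x)+1$, and by the orbit-invariance $c(y)=c(g\cdot y)>c(x)$. The case $g\cdot y\prec x$ is symmetric. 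Either way $c(x)\neq c(y)$, proving that $c$ is a proper coloring and hence $\chi(\widetilde{C}_P)\leq\dim(P)+1$.

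The only subtlety, and the step I expect to be the crux, is recognizing that the extra clause $y\notin[x]$ in the definition of the strong compatibility graph (absent from the ordinary compatibility graph) is precisely what rules out the degenerate case $x=g\cdot y$ and thereby upgrades comparability to \emph{strict} comparability; without this, one could only conclude $c(x)\leq c(y)$, which is not enough for a proper coloring. Everything else is bookkeeping with chain lengths.
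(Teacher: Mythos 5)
Your proof is correct and takes essentially the same approach as the paper: a rank-type coloring by longest chain length, with the clause $y\notin[x]$ used exactly as in the paper to force $x\neq g\cdot y$ and hence strict comparability. The only (cosmetic) differences are that you measure the longest chain \emph{ending} at $x$ while the paper uses the longest chain \emph{starting} at $x$, and you state the $G$-invariance of the rank explicitly where the paper obtains it inline by translating a chain by $g$.
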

\begin{proof}
Define 
\begin{align*}
 c: & \widetilde{C}_P\longrightarrow\{1, \ldots, \dim(P)+1\}\\
  & p \longmapsto \max\{\left| p \prec p_1 \prec \cdots \prec p_m\right| : p_i\in P\}.
\end{align*}
We claim that $c$ is a proper coloring of $\widetilde{C}_P$. If $p$ and $q$ are connected in $\widetilde{C}_P$, then there is a $e\neq g\in G$ such that $p$ and $g\cdot q$ are comparable in $P$. Note that $p\neq g\cdot q$, since by definition of $\widetilde{C}_P$, $p\notin\{h\cdot q : h\in G\}$. Without loss of generality assume that $p \prec g\cdot q$. Also, let $c(q) = t+1$ which means there is a chain of length $t$ in $P$ of the following form 
$$ q \prec q_1 \prec \cdots \prec q_t.$$
By multiplying the previous chain by $g$, we get
$$ g\cdot q \prec g\cdot q_1 \prec \cdots \prec g\cdot q_t.$$
On the other hand, we have $p \prec g\cdot q$. Now, by the transitivity of $\prec$
$$p \prec g\cdot q \prec g\cdot q_1 \prec \cdots \prec g\cdot q_t,$$
which means $c(p) > c(q)$. Therefore $c$ is a proper coloring of $\widetilde{C}_P$. Thus, $$\chi\left(\widetilde{C}_P\right)\leq\dim(P)+1.$$
\end{proof}
The following inequality is an analogue of the inequality given in Theorem $5$ with this advantage that the size of $G$ is replaced by $1$. 
\begin{theorem}
If $P$ is a finite free $G$-poset, then
$$ind_{G} ||\Delta (P)||+ 1\leq\chi\left(\widetilde{C}_P\right).$$
\end{theorem}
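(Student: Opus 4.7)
The plan is to construct a continuous $G$-equivariant map $\phi \colon ||\Delta(P)|| \to \mathbb{E}_{m-1}G$, where $m := \chi(\widetilde{C}_P)$; by Proposition~6 this gives $\mathrm{ind}_G ||\Delta(P)|| \leq \mathrm{ind}_G \mathbb{E}_{m-1}G = m-1$, which is exactly the claim. I realize $\mathbb{E}_{m-1}G$ concretely as the $m$-fold join $G^{*m} = ||\Delta(G \times [m])||$.

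Fix a proper coloring $c \colon P \to [m]$ of $\widetilde{C}_P$ and write $V_i := c^{-1}(i)$. Choose an orbit representative $r_O$ for each $G$-orbit $O$ of $P$; for every $i \in c(O)$, fix some $y^i_O \in V_i \cap O$ and let $f_i(r_O) \in G$ be the unique group element with $f_i(r_O) \cdot y^i_O = r_O$. Extend $f_i$ equivariantly by $f_i(h \cdot r_O) := h f_i(r_O)$, so that $f_i(x)^{-1} \cdot x \in V_i$ whenever $i \in c([x])$. Then define, for each vertex $x$ of $\Delta(P)$,
\[
\phi(x) := \sum_{i \in c([x])} \frac{|V_i \cap [x]|}{|G|}\,(f_i(x), i) \in G^{*m}.
\]
This is a convex combination with one vertex per factor of the join (the coefficients sum to $1$ via the orbit partition $\sum_i |V_i \cap [x]| = |[x]| = |G|$), hence a genuine point of $G^{*m}$, and a direct computation gives $\phi(hx) = h\phi(x)$.

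The critical step is extending $\phi$ linearly over each simplex of $\Delta(P)$; this will succeed provided that, for any chain $x_1 \prec \cdots \prec x_k$ in $P$, the points $\phi(x_1), \ldots, \phi(x_k)$ all lie in a common simplex of $G^{*m}$. I would establish the following compatibility lemma: if $i \in c([x_a]) \cap c([x_b])$ with $a < b$, then $f_i(x_a) = f_i(x_b)$ --- granted this, the union of supports of the $\phi(x_j)$ is a chain in $G \times [m]$ and therefore lies in one simplex. The lemma is proved by contradiction: if $h_1 := f_i(x_a) \neq f_i(x_b) =: h_2$, set $y_a := h_1^{-1}x_a$ and $y_b := h_2^{-1}x_b$, both in $V_i$; the relation $x_a \prec x_b$ rearranges to $y_a \prec (h_1^{-1}h_2)\,y_b$ with $h_1^{-1}h_2 \neq e$, and because $P$ is a free $G$-poset one has $[y_a] \neq [y_b]$; hence $y_a$ and $y_b$ are adjacent in $\widetilde{C}_P$, contradicting $c(y_a) = c(y_b) = i$. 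This compatibility check is the main obstacle, and it is precisely where the definition of $\widetilde{C}_P$ --- with its insistence on a witness $g \neq e$ --- is tailored for the argument, a place where the weaker compatibility graph $C_P$ would not suffice. Once compatibility holds, the linear extension produces the desired continuous $G$-equivariant map and completes the proof.
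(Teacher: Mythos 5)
Your proposal is correct and takes essentially the same route as the paper's proof: both convert a proper $m$-coloring of $\widetilde{C}_P$ into a $G$-equivariant map into $||\Delta(G\times[m])||=\mathbb{E}_{m-1}G$, and your key compatibility lemma --- rearranging $x_a\prec x_b$ into $y_a\prec (h_1^{-1}h_2)\cdot y_b$ with $h_1^{-1}h_2\neq e$, ruling out $[y_a]=[y_b]$ by the orbit-iteration argument for free $G$-posets, and contradicting properness of $c$ --- is exactly the paper's central verification. The only difference is cosmetic: the paper assigns each orbit the single color of its chosen representative, yielding a genuinely simplicial map $g\cdot x'\mapsto (g, c(x'))$, whereas your convex combination spread over all colors of the orbit builds a merely piecewise-affine equivariant map, which works but is more elaborate than needed (also, the properties of $ind_G$ you invoke are Proposition 3 in the paper, not Proposition 6).
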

\begin{proof}
Let $c : \widetilde{C}_P\to [m]$ be a proper coloring of $\widetilde{C}_P$ with $m$ colors. In the following, we will show that this coloring induces a simplicial $G$-equivariant map
\begin{align*}
  \lambda: & \Delta (P)\longrightarrow \Delta (G\times [m])\\
  & x\longmapsto (\lambda_{1}(x), \lambda_{2}(x)).
\end{align*}
First, we divide $P$ into equivalence classes, the orbits under the $G$-action, where for every $x\in P$ each class $[x]$ contains all elements $g\cdot x$, where $g\in G$, i.e, $[x] = \{g\cdot x : g\in G\}$. We pick one element from each class $[x]$ as a representative, say $x^{\prime}$, and we set $\lambda (x^{\prime}) = (e, c(x^{\prime}))$. Then, we extend $\lambda$ on the remaining elements of each class in the only possible way that it preserves the $G$-action and $\lambda_{2}(x) = c(x^{\prime})$ for each $x\in[x^{\prime}]$. In other words, we define $\lambda(g\cdot x^{\prime}) = (g, c(x^{\prime}))$ for each $g\in G$. 
Let us to verify that $\lambda$ is a well-defined function. For this purpose, we need to show that any point in $[x^{\prime}]$, say $x$, can be uniquely represented as $x= g\cdot x^{\prime}$ for some $g\in G$. Now, suppose that $x= g\cdot x^{\prime}= h\cdot x^{\prime}$ for some $g, h\in G$. Then $(h^{-1}g)\cdot x^{\prime}= x^{\prime}$. So $h^{-1}g=e$, as $P$ is a free $G$-poset. Thus, $h=g$.

Next, we show that $\lambda$ is a simplicial $G$-equivariant map. Clearly, by the definition of $\lambda$, this map preserves the $G$-action, i.e, for each $g\in G$ and $x\in P$, $\lambda(g\cdot x)=g\cdot \lambda(x)$. So, to prove our claim we just need to show that $\lambda$ is a simplicial map, i.e, takes any simplex to a simplex. Note that $\sigma\subseteq G\times [m]$ is a simplex of $\Delta (G\times [m])$ if and only if it contains no two different elements with the same second entries. Therefore, $\lambda$ is a simplicial map if and only if for all comparable elements $x, y$ in $P$, if $\lambda_2 (x)=\lambda_2(y)$, then $\lambda_1(x)=\lambda_1(y)$.

Now, let $x$ and $y$ be distinct elements of $P$ with $x\prec y$. Moreover, assume that $\lambda (x) = (g, c(x^{\prime}))$ and $\lambda (y) = (h, c(y^{\prime}))$, where $x^{\prime}$ and $y^{\prime}$ are representatives of classes $[x]$ and $[y]$, respectively, and $c(x^{\prime})= c(y^{\prime})$.  To finish the proof, we need to show that $g=h$. Suppose, contrary to our claim, that is $g\neq h$. By definition of $\lambda$, $x^{\prime} = g^{-1}\cdot x$ and $y^{\prime} = h^{-1}\cdot y$. So, $$\underbrace{g^{-1}\cdot x}_{x^{\prime}}\prec g^{-1}\cdot y = (g^{-1}h)\underbrace{h^{-1}\cdot y}_{y^{\prime}}.$$ Therefore, taking into account that $g^{-1}h\neq e$, we will conclude that $x^{\prime}$ and $y^{\prime}$ are adjacent in $\widetilde{C}_P$, as soon as we prove that $y^{\prime}\notin [x^{\prime}]$. And, this contradicts the fact that $c$ is a proper coloring of $\widetilde{C}_P$. To obtain a contradiction, suppose that $y^{\prime}\in [x^{\prime}]$. This implies that $y\in [x]$ as well. So, there is a nontrivial element $s\in G$ such that $y= s\cdot x$. Thus $x \prec s\cdot x$, as $x\prec y$. By multiplying both sides of previous inequality by $e, s, \ldots , s^{|G|-1}$, respectively, we get
\begin{align*}
x &\prec s\cdot x\\
s\cdot x &\prec s^2\cdot x\\
&\vdots\\
s^{|G|-1}\cdot x &\prec s^{|G|}\cdot x=x.
\end{align*}
Now, by the transitivity of $\prec$, $x \prec x$, which is impossible. In summary, until yet, we have concluded that $\lambda$ is a $G$-simplicial map. This map naturally induces a continuous $G$-equivariant map from $||\Delta (P)||$ to $||\Delta\left (G\times [m]\right)||$. Therefore, according to Proposition $3$, $$ind_{G} ||\Delta (P)||\leq ind_{G} ||\Delta\left (G\times [m]\right)|| = m-1.$$ Consequently,
$ind_{G} ||\Delta (P)||+ 1\leq\chi (\widetilde{C}_P)$. This is the desired conclusion.
\end{proof}

As corollaries of Lemma 7 and Theorem 8, we will state and prove a generalization of Dold's theorem, and $G$-Tucker's lemma. Before proceeding, let us recall a definition. The face poset of a simplicial complex $\mathcal{K}$ is the poset $P(\mathcal{K})$, which is the set of all nonempty simplices of $\mathcal{K}$ ordered by inclusion. Moreover, if $\mathcal{K}$ is a free $G$-simplicial complex, then $G$ induces a free action on the poset $P(\mathcal{K})$, and consequently turns it to a free $G$-poset. Now, we are in a position to state and prove a common generalization of Dold's theorem and $G$-Tucker's lemma.

\begin{theorem}
If $G$ is a non-trivial finite group, $P$ an $n$-connected $G$-poset, and $\mathcal{Q}$ a free $G$-poset with $\chi\left(\widetilde{C}_P\right)\leq n+1$, then:
\begin{enumerate}[a)]
    \item There is no continuous $G$-equivariant map from $||\Delta (P)||$ to $||\Delta (Q)||$.
    \item  If $\lambda : P\to Q$ is a $G$-map, i.e, $\lambda(g\cdot x)= g\cdot\lambda(x)$, then there exist comparable elements $x, y$ in $P$ such that $\lambda(x)$, and $\lambda(y)$ are not comparable in $Q$.
\end{enumerate}
\end{theorem}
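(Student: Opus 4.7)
The plan is to derive (a) from Theorem 8 combined with the basic index estimate for $n$-connected $G$-spaces (Proposition 3), and to obtain (b) as a direct consequence of (a) by showing that a comparability-preserving $G$-equivariant map of posets induces a simplicial $G$-equivariant map of their order complexes. (I read the hypothesis on the target as $\chi(\widetilde{C}_{\mathcal{Q}})\le n+1$, since it is Theorem 8 applied to the free $G$-poset $\mathcal{Q}$ that makes the inequalities line up.)

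For (a) I would argue by contradiction: suppose there is a continuous $G$-equivariant map $f:||\Delta(P)||\to ||\Delta(\mathcal{Q})||$. Because $P$ is $n$-connected, Proposition 3(3) gives
$$
ind_{G}\,||\Delta(P)||\ \ge\ n+1.
$$
Because $\mathcal{Q}$ is a finite free $G$-poset, Theorem 8 applies to $\mathcal{Q}$ and yields
$$
ind_{G}\,||\Delta(\mathcal{Q})||\ \le\ \chi(\widetilde{C}_{\mathcal{Q}})-1\ \le\ n.
$$
On the other hand, Proposition 3(1) applied to $f$ forces $ind_{G}\,||\Delta(P)||\le ind_{G}\,||\Delta(\mathcal{Q})||$, directly contradicting the two displayed inequalities.

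For (b) let $\lambda:P\to\mathcal{Q}$ be a $G$-equivariant map and assume, for contradiction, that every pair of comparable elements $x,y\in P$ is sent to a comparable pair in $\mathcal{Q}$. Given any chain $x_{1}\prec\cdots\prec x_{k}$ of $P$, the images $\lambda(x_{1}),\ldots,\lambda(x_{k})$ form a finite, pairwise-comparable subset of the poset $\mathcal{Q}$, which is therefore automatically totally ordered, i.e.\ a chain (after collapsing possible coincidences). Hence $\lambda$ sends simplices of $\Delta(P)$ to simplices of $\Delta(\mathcal{Q})$ and is a simplicial $G$-equivariant map; passing to geometric realizations produces a continuous $G$-equivariant map $||\Delta(P)||\to ||\Delta(\mathcal{Q})||$, contradicting (a).

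The main thing to be careful about is bookkeeping: Theorem 8 must be invoked on $\mathcal{Q}$ (this is where the freeness hypothesis is spent), while the connectivity hypothesis is spent on $P$ through Proposition 3(3). The only genuinely combinatorial point in (b) is the small observation that a finite pairwise-comparable subset of a poset is automatically a chain, which ensures the simplicial-map argument goes through without any injectivity assumption on $\lambda$.
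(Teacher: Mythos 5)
Your proof is correct and takes essentially the same route as the paper: part (a) is the identical chain of inequalities $n+1\leq ind_{G}\,||\Delta(P)||\leq ind_{G}\,||\Delta(\mathcal{Q})||\leq\chi(\widetilde{C}_{\mathcal{Q}})-1\leq n$ obtained from Proposition 3 and Theorem 8, and part (b) is reduced to part (a) exactly as in the paper, with your observation that a pairwise-comparable finite set is a chain simply making explicit what the paper summarizes as ``naturally induces'' a simplicial $G$-map. Your reading of the hypothesis as $\chi(\widetilde{C}_{\mathcal{Q}})\leq n+1$ is also the right correction of a typo in the statement, as confirmed by the paper's own proof, which bounds the index of the \emph{target} by the chromatic number of its strong compatibility graph (written there, with a further typo, as $\widetilde{C}_{P(\mathcal{L})}$).
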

\begin{proof}
Suppose, contrary to our claim, that there is a continuous $G$-equivariant map\linebreak
$\Psi : ||\Delta (P)||\to||\Delta (Q)||$. Therefore, by Proposition $3$ and Theorem $8$
\begin{align*}
n+1  = conn (||\Delta (P)||) + 1 & \leq Ind_{G}||\Delta (P)||\\
& \leq Ind_{G}||\Delta (Q)||\leq\chi\left(\widetilde{C}_{P(\mathcal{L})}\right)-1\leq n,
\end{align*}
which is impossible. The part $(b)$ is easily deduced from part $(a)$. Indeed, if $\lambda(x)$ and $\lambda(y)$ were comparable for every comparable elements $x, y$ in $P$, then $\lambda$ would naturally induce a continuous $G$-equivariant map from $||\Delta (P)||$ to $||\Delta (Q)||$. 
\end{proof}
Lemma $7$ ensures us that Dold's theorem is a consequence of the part $(a)$. More precisely, let $G$ be a finite non-trivial group, $\mathcal{K}$ an $n$-connected simplicial $G$-complex, and $\mathcal{L}$ a free simplicial $G$-complex of dimension at most $n$. If there was a continuous $G$-equivariant from $||\mathcal{K}||$ to $||\mathcal{L}||$, then there would be a continuous $G$-equivariant map from $||\Delta(P(\mathcal{K}))||$ to $||\Delta (P(\mathcal{L}))||$. This comes from the fact that for every finite simplicial $G$-complex $\mathcal{F}$, the $G$-space $||\mathcal{F}||$ is $G$-homeomorphic to the $G$-space $||\Delta(P(\mathcal{F}))||$. Now, 
$$\chi\left(\widetilde{C}_{P(\mathcal{L})}\right)\leq\dim (P(\mathcal{L}))+1=\dim (\mathcal{L})+1\leq n+1,$$
and $P(\mathcal{K})$ is $n$-connected, which contradicts the part $(a)$ of the previous theorem. In addition, in light of the following facts, one can easily check that the part $(b)$ implies $G$-Tucker's lemma. 
\begin{enumerate}
\item The space $||\Delta ({\left(G\cup\{0\}\right)
}^n\setminus\{(0,\ldots , 0)\})||$ has the homotopy type of a wedge of\linebreak $(n-1)$-dimensional spheres~\cite[section 6.2]{matousek2008using}. Therefore $||\Delta\left({\left(G\cup\{0\}\right)
}^n\setminus\{(0,\ldots , 0)\}\right)||$ is $(n-2)$-connected.
\item The strong compatibility graph of $(G\times [n-1], \preceq_1)$, $\widetilde{C}_{G\times [n-1]}$, is isomorphic with the complete $(n-1)$-partite graph $K_{\underbrace{|G|,\ldots, |G|}_{n-1}}$. Thus, 
$$\chi\left(\widetilde{C}_{G\times [n-1]}\right)= n-1\quad\text{as}\quad\chi\left(K_{\underbrace{|G|,\ldots, |G|}_{n-1}}\right)= n-1.$$
\item Distinct elements $(g, x)$ and $(h, y)$ are comparable in $(G\times [n-1], \preceq_1)$ if and only if $x\neq y$.
\end{enumerate}
In the following, we provide a series of examples to show that one can hope to infer much more information from the generalized Dold's theorem than the ordinary one. Actually, for this purpose, we need to construct free $G$-posets whose the chromatic number of their strong compatibility graphs are substantially smaller than their dimensions. Let us start with a simple example.    
\begin{example}
\normalfont
In this example, we equip $G\times [n]$ with another ordering, called $\preceq_2$, rather than $\preceq_1$.
Let $R= G\times [n]$ be the $G$-poset whose the action is given $h\cdot (g, i)\to (hg, i)$, and the order is defined by $(g, x) \prec_2 (h, y)$ if $x < y$ and $g=h$. It is easy to see that $\dim(R) = n-1$. Now, in the following we  will show that 
$$\chi (\widetilde{C}_{R}) =\min\{\left| G\right|, n\}.$$ Put $m = \min\{\left| G\right|, n\}$. Let $x_1 <\cdots < x_m$ and $g_1, \ldots, g_m$ be distinct elements of $[n]$ and $G$, respectively. By the definition of $\widetilde{C}_{R}$, it is easily seen that the vertices 
$$(g_1, x_1), (g_2, x_2), \ldots, (g_m, x_m)$$ form a clique of size $m$ in $\widetilde{C}_{R}$. Thus, $m\leq\chi\left(\widetilde{C}_{R}\right)$. For the reverse side, on the one hand by Lemma $7$ 
$$\chi\left(\widetilde{C}_{R}\right)\leq\dim(R)+1=n.$$ 
On the other hand, the following map gives a proper coloring of $\widetilde{C}_{R}$ with $|G|$-colors.
\begin{align*}
 c : \widetilde{C}_{R}\longrightarrow & \{1, \ldots, \left| G\right|\}\\
  & (h, y) \longmapsto h.
\end{align*}
Therefore, $\chi (\widetilde{C}_{R}) =\min\{\left| G\right|, n\}$. So, for this case, whenever $n >> \left| G\right|$, there is a huge gap between the amount of information which is released from the generalized Dold theorem than the original one.
\end{example}
For constructing some more interesting examples, we need to recall the definition of Hom-complexes which were introduced by Lov\'{a}sz.
\begin{definition}
Let $F$ and $H$ be graphs and $V(F)=[n]$. The Hom-poset ${Hom}_{p}(F, H)$ is a poset whose elements are given by all $n$-tuples $(A_1, \cdots, A_n)$ of non-empty subsets of $V(H)$ with the property that for any edge $\{i, j\}\in E(F)$, and every $x\in A_i$, $y\in A_j$ we have $\{x, y\}\in E(H)$. The partial order is defined by $A=(A_1,\cdots, A_n)\leq B=(B_1,\cdots, B_n)$ if and only if $A_i\subseteq B_i$ for all $i\in[n]$. Now, the order complex of $Hom_{p}(F, H)$ is called the Hom-complex, and is dented by $Hom(F, H)$. 
\end{definition}
Let us see another example here with the aid of Hom-complexes. 
\begin{example}
\normalfont
Let $C_{r}$ be a cycle of length $r$ with the vertex set $[r]$ and the edge set
$\{\{i, i+1\}: 1\leq i\leq r-1\}\cup\{\{1, r\}\}$, and $K_n$ be the complete graph with $n$ vertices. The group $\mathbb{Z}_2=\{e=\omega^0, \omega\}$ acts on $Hom_{p}(C_{r}, K_n)$ as follows, 
$$\omega\cdot (A_1, A_{2}, \ldots, A_{r-1},A_{r})=(A_{r},A_{r-1},\ldots , A_{2}, A_{1}).$$ 
\normalfont
Clearly, $Hom_{p}(C_{r}, K_n)$ with this action is a free $\mathbb{Z}_2$-poset. It is not hard to see that $\chi\left(\widetilde{C}_{Hom_{p}(C_{r}, K_n)}\right)\leq n$. Indeed, we can color any vertex $(A_1, \ldots, A_r)$ of $\widetilde{C}_{Hom_{p}(C_{r}, K_n)}$ with an arbitrary element of $A_1$. Note that if $A=(A_1,\ldots, A_r)$ is adjacent to $B=(B_1, \ldots , B_r)$ in $\widetilde{C}_{Hom_{p}(C_{r}, K_n)}$, then we must have $A\preceq\omega\cdot B$ or $B\preceq\omega\cdot A$. Without loss of generality, assume that $A\preceq\omega\cdot B$. In particular, this means that $A_1\subseteq B_r$. On the other hand we have $B_1\cap B_r=\emptyset$ as $\{1,r\}$ is an edge of $C_{r}$. Therefore, $A_1\cap B_1=\emptyset$. So, each of them receives different colors and hence this coloring is a proper coloring. Thus, $\chi\left(\widetilde{C}_{Hom_{p}(C_{r}, K_n)}\right)\leq n$. However, the dimension of $Hom_p(C_r, K_n)$ can be arbitrary far away from $n$. Indeed, we always have $\dim\left(Hom_p(C_r, K_n)\right)\geq\lceil\frac{r}{2}\rceil(n-2)$. To see this, we need to find a chain of the mentioned length in $Hom_{p}(C_r, K_n)$. If $r$ is even, we can start the chain with the element $(\{1\},\{2\}, \ldots, \{1\},\{2\})$, and in each steep add one element from the set $[r]\setminus\{1,2\}$ to one of them that already contains $1$ until we reach to the element $([r]\setminus\{2\},\{2\},[r]\setminus\{2\},\ldots, [r]\setminus\{2\},\{2\})$, i.e.,
\begin{align*}
& (\{1\},\{2\}, \ldots, \{1\},\{2\})\prec (\{1, 3\},\{2\}, \ldots, \{1\},\{2\})\prec\cdots\prec ([r]\setminus\{2\},\{2\}, \ldots, \{1\},\{2\})\prec\\
& ([r]\setminus\{2\},\{2\},\{1,3\} \ldots, \{1\},\{2\})\prec\cdots\prec ([r]\setminus\{2\},\{2\},[r]\setminus\{2\},\ldots, \{1\},\{2\})\prec\ldots\\
& \ldots\prec ([r]\setminus\{2\},\{2\},[r]\setminus\{2\},\ldots, [r]\setminus\{2\},\{2\}).
\end{align*}
For the case that $r$ is odd, we just need to start by $(\{1\},\{2\},\{1\},\{2\}, \ldots, \{1\},\{2\},\{3\})$ and the rest of the procedure is the same as described above.
\end{example}
As we saw in above, for some graphs $F$ the Hom-poset $Hom_p(F, H)$ can be equipped with a free action. As another example, let $F=K_{r}$ be the complete graph with the vertex set $[r]$. The cyclic group $\mathbb{Z}_r=\{e=\omega^0, \omega,\ldots, \omega^{r-1}\}$ can act on the poset $Hom_{p}(K_r, H)$ naturally by cyclic shift. In other words, for each $\omega^{i}\in\mathbb{Z}_r$ and $(A_1,\cdots, A_r)\in {Hom}_{p}(K_r, H)$, define $\omega^{i}\cdot (A_1,\cdots, A_r)=(A_{1+i(\text{mod}\, r)},\cdots , A_{r+i(\text{mod}\, r)})$. Obviously, this action turns the poset $Hom_{p}(K_r, H)$ to a free $\mathbb{Z}_r$-poset. From now on, we consider the Hom-posets  ${Hom}_{p}(K_r, H)$ as a $\mathbb{Z}_r$-poset with the mentioned $\mathbb{Z}_r$-actions above.

Now we are in a position to state our main example. In fact, we will show that for almost all graphs $H$ the chromatic number of the 
$\mathbb{Z}_r$-poset $Hom_p(K_r, H)$ is strictly smaller than its dimension. In particular, this shows that ``for almost all free $\mathbb{Z}_2$-space $X$" our estimation reveal more information than the ordinary Dold theorem; as every free $\mathbb{Z}_2$-space is $\mathbb{Z}_2$-homotopy equivalent to $Hom(K_2, H)$ for some graph $H$~\cite{csorba2007homotopy}. To state our result precisely, we need to review some definitions and facts from the theory of random graphs. Given $n$ and $p$, a random graph $G(n, p)$ is a graph with labeled vertex set $[n]$, where each edge appears independently with probability $p$. We say that a random graph $G(n, p)$ has a property $\mathcal{P}$ almost surely (or with high probability) if 
$$P(G(n, p)\,\text{has property}\,\mathcal{P})\to 1\quad\text{as}\quad n\to\infty.$$ 
Now, we are in a position to state our result precisely.
\begin{theorem}
For any real number $\epsilon > 0$ and any fixed natural number $r\geq 2$ the following inequality holds almost surely 
 $$\chi\left(\widetilde{C}_{Hom (K_r, G(n,\frac{1}{2}))}\right)\leq
        \left(r+\frac{2}{r-1}+ \epsilon\right)\log_2 n.$$
\end{theorem}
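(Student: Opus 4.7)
The plan is to prove the bound by exhibiting an explicit proper coloring of the strong compatibility graph. The starting structural observation is that if $A=(A_1,\ldots,A_r)$ and $B=(B_1,\ldots,B_r)$ are adjacent in $\widetilde{C}_{Hom_{p}(K_r,G(n,1/2))}$ via $A\preceq\omega^j B$ with $j\neq 0$, then $A_i\subseteq B_{i+j}$ for every $i\in[r]$, and since the parts of the complete $r$-partite subgraph $V(B)=B_1\cup\cdots\cup B_r$ are pairwise disjoint we have $B_i\cap B_{i+j}=\emptyset$; hence $A_i\cap B_i=\emptyset$ for every $i$. This coordinate-wise disjointness generalizes the key step used in Example 12 and is the mechanism that any candidate coloring will exploit.

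The random-graph input I would establish is a first-moment estimate. Setting $T:=\lceil(\tfrac{2}{r-1}+\tfrac{\epsilon}{2r})\log_2 n\rceil$, the expected number of ordered complete $r$-partite subgraphs of $G(n,1/2)$ in which every one of the $r$ parts has size at least $T+1$ is at most
\[
\binom{n}{T+1}^{\,r}\cdot 2^{-\binom{r}{2}(T+1)^{2}}=o(1),
\]
since $T$ is above the balanced threshold $\tfrac{2}{r-1}\log_2 n$. Consequently, almost surely every element $A\in Hom_{p}(K_r,G(n,1/2))$ has at least one part of size at most $T$.

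Given this, for each $A$ I would let $i^{*}(A)$ be the smallest index $i$ with $|A_i|\leq T$ and define $c(A):=(i^{*}(A),\phi(A))$, where $\phi(A)$ is an invariant of the small part $A_{i^{*}(A)}$ taking at most $T$ distinct values. The palette then has size at most $rT\leq(r+\tfrac{2}{r-1}+\epsilon)\log_2 n$. Properness is verified as follows: if $c(A)=c(B)$ for an adjacent pair then $i^{*}(A)=i^{*}(B)=i^{*}$ and $\phi(A)=\phi(B)$, which combined with $A_{i^{*}}\cap B_{i^{*}}=\emptyset$ from the first paragraph should yield a contradiction via the definition of $\phi$; a natural candidate is to let $\phi(A)$ encode the rank of $\min A_{i^{*}(A)}$ within a canonical $T$-element palette derived from the fixed linear order on $V(G(n,1/2))$ together with the size constraint $|A_{i^{*}}|\leq T$.

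The main obstacle is specifying $\phi$ explicitly so that its palette is genuinely of size $T$ while still distinguishing all pairs of coordinate-wise disjoint small parts. Overly coarse choices such as $\phi(A)=|A_{i^{*}(A)}|$ admit adjacent collisions, whereas the naive $\phi(A)=\min A_{i^{*}(A)}$ inflates the palette to size $n$. The correct compromise must leverage both the linear order on $V(G(n,1/2))$ and the typical edge density of $G(n,1/2)$ to translate the set-theoretic disjointness $A_{i^{*}}\cap B_{i^{*}}=\emptyset$ into a numerical inequality $\phi(A)\neq\phi(B)$ within the $T$-sized palette, and the careful design of $\phi$ together with its properness verification is the most delicate technical step of the argument.
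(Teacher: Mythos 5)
Your probabilistic half is sound and is essentially the paper's: the first-moment bound $\binom{n}{T+1}^{r}2^{-\binom{r}{2}(T+1)^{2}}=o(1)$ just above the balanced threshold $\frac{2}{r-1}\log_2 n$ is exactly how the paper (following Kahle) shows that a.a.s.\ $G(n,\frac12)$ contains no $K_{T+1,\ldots,T+1}$, i.e.\ every $A\in Hom_p(K_r,G(n,\frac12))$ has a part of size at most $T$; your coordinate-wise disjointness observation $A_i\cap B_i=\emptyset$ is also correct. The genuine gap is the coloring, and it is worse than the ``delicate technical step'' you defer: no function $\phi$ of the single small part $A_{i^{*}(A)}$ with a palette of size $O(\log n)$ can exist, so the template $c(A)=(i^{*}(A),\phi(A_{i^{*}(A)}))$ is unfixable, not merely incomplete. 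Already for $r=2$: take an edge $uw$ of the realized graph $H=G(n,\frac12)$ and any further neighbor $v$ of $w$, and set $A=(\{u\},\{w\})$, $B=(\{w\},\{u,v\})$. Then $B\in Hom_p(K_2,H)$, $A\preceq\omega\cdot B=(\{u,v\},\{w\})$, and $B\notin[A]$, so $A$ and $B$ are adjacent in $\widetilde{C}_{Hom_p(K_2,H)}$, and both have $i^{*}=1$ with small parts $\{u\}$ and $\{w\}$. Properness of your $c$ would therefore force $\phi(\{u\})\neq\phi(\{w\})$ for every edge $uw$ whose endpoint $w$ has degree at least $2$ --- a.a.s.\ all edges --- so $\phi$ restricted to singletons would be a proper coloring of $G(n,\frac12)$ itself, requiring $(1+o(1))\frac{n}{2\log_2 n}$ colors, vastly more than $T=O(\log n)$. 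The root cause is that your index $i^{*}$ has no rigidity under the order relation: $A_i\subseteq B_{i+j}$ bounds part sizes in one direction only, so adjacent vertices routinely share $i^{*}$, and you are then left with separating arbitrary disjoint sets of size at most $T$, a Kneser-type problem hopeless with so few colors. (That your hoped-for palette $rT\approx\bigl(2+\frac{2}{r-1}\bigr)\log_2 n$ would beat the theorem's bound for every $r\geq 3$ was itself a warning sign.)

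The idea you are missing is the paper's Lemma 14, whose coloring makes the distinguishing coordinate a \emph{position} in $[r]$ rather than an invariant of a vertex set. Color $A$ by $(m_A,|\Gamma_A|,\lambda_A)$, where $m_A$ is the exact minimum part size (at most $t-1$ under the no-$K_{t,\ldots,t}$ hypothesis), $\Gamma_A$ is the set of indices attaining it, and $\lambda_A$ is the index of the part containing $\min\bigl(\bigcup_{i\in\Gamma_A}A_i\bigr)$; vertices with $|\Gamma_A|=r$ receive the color $m_A$ alone (two adjacent vertices cannot both have $|\Gamma|=r$, since the containment would then force $A=\omega^{i}\cdot B$, i.e.\ the same orbit, which the strong compatibility graph excludes). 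The rigidity your scheme lacks is supplied by exact minimality: if $A\preceq\omega^{i}\cdot B$ with $m_A=m_B$ and $|\Gamma_A|=|\Gamma_B|<r$, the containments collapse to equalities $A_j=B_{j+i}$ on the minimizing indices, so the very same minimum vertex occurs in both $A$ and $B$, at positions satisfying $\lambda_B\equiv\lambda_A+i\pmod{r}$ with $i\not\equiv 0$, whence $\lambda_A\neq\lambda_B$. Plugging $t-1\approx\frac{2}{r-1}(1+\epsilon)\log_2 n$ from the first-moment step into the resulting bound $\chi\bigl(\widetilde{C}_{Hom(K_r,H)}\bigr)\leq\bigl(\binom{r}{2}+1\bigr)(t-1)$ is precisely where the constant $r+\frac{2}{r-1}$ in the statement comes from.
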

Let us begin with the following lemma, which plays a key role in the proof.
\begin{lemma}
Let $H$ be a graph and $r\geq2$ be a positive integer. If\hspace{0.2cm}$H$ contains no copy of $K_{\underbrace{t,\ldots, t}_{r}}$, then 
$$\chi\left(\widetilde{C}_{Hom(K_r, H)}\right)\leq\left(\binom{r}{2}+1\right)(t-1).$$
\end{lemma}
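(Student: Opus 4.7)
The plan is to exhibit an explicit proper coloring of $\widetilde{C}_{Hom(K_r,H)}$ using at most $(\binom{r}{2}+1)(t-1)$ colors. Two structural observations about $Hom_p(K_r,H)$ are central. Since $K_r$ is complete, the Hom condition forces, for any $A=(A_1,\ldots,A_r)$ and any $i\ne j$, every vertex of $A_i$ to be adjacent in $H$ to every vertex of $A_j$; combined with the absence of self-loops in $H$, this yields pairwise disjoint coordinates $A_i\cap A_j=\emptyset$. Since $H$ contains no $K_{t,\ldots,t}$, at least one coordinate of every tuple has size at most $t-1$, otherwise choosing $t$ vertices from each $A_i$ would realize the forbidden subgraph inside $H$. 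Put $T(A):=\{i\in[r]:|A_i|\le t-1\}$, which is thus always non-empty.

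I would next unpack the adjacency relation. If $A\sim B$ in $\widetilde{C}$, then for some $k\in\{1,\ldots,r-1\}$ one has $A_j\subseteq B_{j+k\bmod r}$ for all $j$ (possibly after swapping $A$ and $B$), and $B\notin[A]$. Combining this with the disjointness from the first paragraph yields the key consequence $A_j\cap B_j=\emptyset$ for every $j$: a common element $v$ would lie in $A_j\subseteq B_{j+k}$ and also in $B_j$, violating $B_j\cap B_{j+k}=\emptyset$.

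I would then define a coloring $c(A)=(P(A),s(A))\in\Sigma\times\{1,\ldots,t-1\}$, where the type set $\Sigma$ has cardinality $\binom{r}{2}+1$, consisting of one symbol per unordered pair $\{i_1,i_2\}\subseteq[r]$ plus one extra ``default'' symbol. The rule for $P(A)$ would be: record a canonical unordered pair drawn from $T(A)$ when $|T(A)|\ge 2$, and the default symbol when $|T(A)|=1$; the size $s(A)=|A_{i(A)}|$ records a canonical small-coordinate size. Propriety would be verified by taking $A\sim B$ with $c(A)=c(B)$ and showing that matching types together with matching sizes pin down a common coordinate $j\in T(A)\cap T(B)$ at which a canonical vertex lies in $A_j\cap B_j$, contradicting the disjointness of the previous paragraph.

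The main obstacle is the precise design of the type symbol $P(A)$. A naive choice such as ``the unordered pair of the two smallest elements of $T(A)$'' is not invariant enough under the $\mathbb{Z}_r$-action: for adjacent singleton tuples, $T(A)$ and $T(B)$ are typically related by precisely the shift $k$ witnessing the adjacency, so both receive the default symbol and matching sizes fail to force a contradiction. Overcoming this will require letting $P(A)$ record a rotation-invariant fingerprint of $T(A)$ (for example, a canonical pair drawn from its cyclic gap pattern), so that matching types combined with the inclusions $A_j\subseteq B_{j+k}$ force $k=0$. Ensuring that this fingerprint takes exactly $\binom{r}{2}+1$ distinct values is the technical heart of the argument.
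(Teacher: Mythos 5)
Your preliminary reductions are all correct: the coordinates of any $A\in Hom_p(K_r,H)$ are pairwise disjoint, at least one coordinate has size at most $t-1$, adjacency in $\widetilde{C}_{Hom(K_r,H)}$ reduces (after possibly swapping $A$ and $B$) to the shifted inclusions $A_j\subseteq B_{j+k\bmod r}$ with $1\le k\le r-1$, and consequently $A_j\cap B_j=\emptyset$ for all $j$. But what follows is a plan rather than a proof: the type $P(A)$ is never constructed, and you yourself verify that the candidates you consider fail. The failure is structural, not a deferrable technicality. Since adjacency relates the small-coordinate data of $A$ and $B$ by a nonzero shift, any rotation-invariant fingerprint of $T(A)$ --- including the ``cyclic gap pattern'' you propose at the end --- necessarily takes the \emph{same} value on $A$ and on $B$, so matching types can never ``force $k=0$''; your last paragraph asks $P$ to do two contradictory things, namely to be invariant under the $\mathbb{Z}_r$-shift and simultaneously to detect that shift. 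Two further specific weaknesses: your threshold set $T(A)=\{i:|A_i|\le t-1\}$ only yields the one-sided containment $T(B)-k\subseteq T(A)$, with no rigidity; and the propriety mechanism you sketch (``a canonical vertex lies in $A_j\cap B_j$'') is unsupported, since nothing in your color data locates a common vertex of $A_j$ and $B_j$.

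The paper resolves exactly this obstacle by recording data that is deliberately \emph{not} shift-invariant. It colors $A$ by the triple $(m_A,|\Gamma_A|,\lambda_A)$, where $m_A=\min_i|A_i|\le t-1$, $\Gamma_A=\{i:|A_i|=m_A\}$ is the \emph{argmin} set (not a threshold set), and $\lambda_A\in[r]$ is the raw index of the coordinate containing $v=\min\bigl(\bigcup_{i\in\Gamma_A}A_i\bigr)$; the color degenerates to $m_A$ alone when $\Gamma_A=[r]$. The argmin creates the rigidity your $T(A)$ lacks: if $m_A=m_B$, $|\Gamma_A|=|\Gamma_B|<r$, and $A_j\subseteq B_{j+i}$ for all $j$, then the inclusions are \emph{equalities} on the argmin coordinates and $\Gamma_B$ is $\Gamma_A$ shifted by $i$, so the two unions coincide, the minimum vertex $v$ is common to both, and disjointness of the coordinates of $B$ forces the index of $v$ in $B$ to be the index of $v$ in $A$ shifted by $i$; hence $\lambda_A\neq\lambda_B$. (This is your canonical-vertex idea made to work: if the recorded indices agreed, $v$ would lie in $A_\lambda\cap B_\lambda=\emptyset$.) When $\Gamma_A=\Gamma_B=[r]$ and $m_A=m_B$, the inclusions give $A=\omega^i\cdot B$, excluded since $\widetilde{C}$ has no edges within an orbit. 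One caveat that validates your worry about hitting exactly $\binom{r}{2}+1$ types: the paper's count rests on the assertion $1\le\lambda_X\le|\Gamma_X|$, which is false for the raw index ($\lambda_X$ is an arbitrary element of $\Gamma_X\subseteq[r]$), so the coloring as actually defined uses up to $\left(r(r-1)+1\right)(t-1)$ colors; and re-encoding $\lambda_X$ as its rank inside $\Gamma_X$ restores the count but breaks propriety --- for $r=3$, take $A=(\{1\},\{2\},\{4,5\})$ and $B=(\{4,5,6\},\{1\},\{2\})$ in $Hom_p(K_3,H)$ for $H$ the complete tripartite graph with parts $\{1\},\{2\},\{4,5,6\}$: then $A\subseteq\omega^i\cdot B$ for a suitable $i$, $B\notin[A]$, yet both receive $m=1$, $|\Gamma|=2$, rank $1$. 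So the shift-detecting raw index is the missing idea you need, and the precise multiplicative constant (which only affects the constant in Theorem 13) is delicate even in the paper's own argument.
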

\begin{proof}
For simplicity, we first introduce some notation.
For a vertex $A=(A_1, \ldots, A_r)$ in $\widetilde{C}_{Hom_p(\mathcal{K}_r, H)}$, put
\begin{align*}
m_A & =\min\{|A_i| : 1\leq i\leq r\},\\
\Gamma_A & = \{i : |A_i|= m_A, 1\leq i\leq r\},\\
\lambda_A & = j\quad\Longleftrightarrow\quad \min\left(\bigcup_{i\in\Gamma_A}A_i\right)\in A_j.
\end{align*}
In the following lines, we define a proper coloring of $\widetilde{C}_{Hom_p(K_r, H)}$. Let $X=(X_1, \ldots, X_r)$ be a vertex of $\widetilde{C}_{Hom_p(K_r, H)}$. Define 
\[
c (X)=
\begin{cases}
(m_X, |\Gamma_X|, \lambda_X)\quad & \text{if}\quad |\Gamma_X|< r\\
m_X\quad & \text{if}\quad |\Gamma_X|=r
\end{cases}
\]
First note that this map uses at most $\left(\binom{r}{2}+1\right)(t-1)$ number of colors. Indeed, if $1\leq |\Gamma_X| < r$, then $1\leq m_X\leq (t-1)$, and $1\leq\lambda_X\leq |\Gamma_X|$, and for the case that $|\Gamma_X| =r$ we have at most $(t-1)$ possibility for $m_X$. Now, we need to show that this coloring is proper, i.e, no two adjacent vertices receives the same color. Suppose $A=(A_1, \ldots, A_r)$ and $B = (B_1, \ldots, B_r)$ are adjacent in $\widetilde{C}_{Hom_p(K_r, H)}$. Thus, there is an $1\leq i\leq r-1$ such that $A\subseteq\omega^i\cdot B$ or $B\subseteq\omega^i\cdot A$. Without loss of generality, we can assume that $A\subseteq\omega^i\cdot B$ $(1)$. Now, we consider two cases. The first one is the case that $m_A=m_B$, and $|\Gamma_A|=|\Gamma_B| < r$ $(2)$. To confirm our claim, we need to show that $\lambda_A\neq\lambda_B$. The properties $(1), (2)$ imply $\Gamma_A =\{b+i \Mod r: b\in \Gamma_B\}$, and $A_j=B_{j+i(\text{mod}\, r)}$ for all $j\in\Gamma_A$. Thus, $\lambda_A+i\equiv\lambda_B\Mod r$. This implies that $\lambda_A\neq\lambda_B$ as $1\leq i\leq r-1$. Now, consider the case that $|\Gamma_A|=r$. We claim that $|\Gamma_B|\neq r$. Suppose, contrary to our claim, that $|\Gamma_A|=|\Gamma_B|=r$. This property beside the fact that $A\subseteq\omega^i\cdot B$ imply that $A= \omega^i\cdot B$. Therefore, $A$ and $B$ lie in a same orbit in $Hom_p(K_r, H)$  and hence they are not connected in $\widetilde{C}_{Hom_p(K_r, H)}$. This contradiction finishes the proof.
\end{proof}
Now we are in a position to prove Theorem $13$. Actually, in light of Lemma $14$, it is enough to find the minimum $r$ such that no copy of $K_{\underbrace{t,\ldots, t}_{r}}$ appears in $G(n,\frac{1}{2})$ with high probability. This part may be proved in much the same way as Corollary 2.4 in~\cite{Kahle}. However, for the convenience of the reader we give a proof here.
\begin{proof1}
\normalfont
Let $t =\lceil{a\log_2 n\rceil}$ where $a=\frac{r}{\binom{r}{2}}(1 +\epsilon)$ and $\epsilon > 0$. We show that no copy of $K_{\underbrace{t,\ldots, t}_{r}}$ appears in $G(n,\frac{1}{2})$ with high probability. Suppose that $U_1,\ldots , U_r$ are pairwise disjoint subsets of $[n]$, each of size $t$. The probability that the complete $r$-partite graph with parts $U_1,\ldots , U_r$ appears in $G(n, \frac{1}{2})$ is ${\frac{1}{2}}^{\binom{r}{2}t^2}$. Moreover, the number of complete $r$-partite graphs where each part has size $t$ on the vertex set $[n]$ is $\frac{\binom{n}{t}\binom{n-t}{t}\cdots\binom{n-(r-1)t}{t}}{r!}$.
Thus, the probability that any $K_{t,\ldots, t}$ appears in $G(n,\frac{1}{2})$ is bounded above by 
$$\frac{\binom{n}{t}\binom{n-t}{t}\cdots\binom{n-(r-1)t}{t}}{r!}{\frac{1}{2}}^{\binom{r}{2}t^2}.$$
On the other hand
\begin{equation*} 
\begin{split}
 \left(\frac{\binom{n}{t}\binom{n-t}{t}\cdots\binom{n-(r-1)t}{t}}{r!}\right)\left({\frac{1}{2}}^{\binom{r}{2}t^2}\right) & \leq \\
 \left(n^{rt}\right)\left({\frac{1}{2}}^{\binom{r}{2}t^2}\right)   & \leq\\
 \left(n^{r\left(a\log_2 n+1\right)}\right)\left({\frac{1}{2}}^{\binom{r}{2}a^2{\left(\log_2 n\right)}^2}\right) & =\\
 \left(n^{r\left(a\log_2 n+1\right)}\right)\left({\left(\underbrace{{({2^{-1})}^{\log_2 n}}}_{n}\right)}^{\binom{r}{2}a^2{\log_2 n}}\right) & =\\ 
 n^{r+ ra\log_2 n\left(1-\frac{a}{r}\binom{r}{2}\right)} & = n^{r-ra\epsilon\log_2 n}\to 0\quad\text{as}\quad n\to\infty.
\end{split}
\end{equation*}
Now, using Lemma 14, one can say that the following inequality holds with high probability
\begin{align*}
\chi\left(\widetilde{C}_{Hom_p(K_r, G(n,\frac{1}{2}))}\right) & \leq\left(1+\binom{r}{2}\right)\frac{r}{\binom{r}{2}}(1 +b\epsilon)\log_2 n\quad\\
& = \left(r+\frac{2}{r-1}+ \epsilon\right)\log_2 n
\end{align*}
where $b=\frac{1}{\left(1+\binom{r}{2}\right)\frac{r}{\binom{r}{2}}}$. Now, the proof is completed.\qed
\end{proof1}

Now, for a comparison let us compute the asymptotic behavior of the dimension of $Hom_p(K_r, G(n,\frac{1}{2}))$. Clearly,
If a graph $H$ contains a copy of complete $r$-partite subgraph $K_{l_1,\ldots,l_r} $, then the dimension of $Hom_p(K_r, H)$ is at least $l_1+\ldots+l_r-r$. 
We claim that for any $0< \beta < 1$ a copy of 
$K_{\alpha,\underbrace{1,\ldots,1}_{r-1}}$ will appear in $G(n, \frac{1}{2})$ with high probability, where $\alpha=\lceil\frac{\beta n}{2^{r-1}}\rceil$. In particular, this implies with high probability
$\dim\left(Hom(K_r, G(n,\frac{1}{2}))\right)\geq\frac{n}{2^{r}}$,
which is much larger than the asymptotic value of $\chi\left(\widetilde{C}_{Hom (K_r, G(n,\frac{1}{2}))}\right)$! Now, to prove our claim, consider the following events:
\begin{itemize}
    \item Let $A_n$ be the event that $G(n,\frac{1}{2})$ contains a copy of $K_{\alpha,\underbrace{1,\ldots,1}_{r-1}}$.
    \item Let $B_n$ be the event that $G(n,\frac{1}{2})$ contains a clique of size $r-1$.
    \item Let $C_n$ be the event that any subset of vertices of $G(n,\frac{1}{2})$  of size $r-1$ has at least $\alpha$ common neighbors in $G(n,\frac{1}{2})$.
\end{itemize}
We must show that $P(A_n)$ tends to one as $n$ goes to infinity. First of all we have,
$P(A_n)\geq P(B_n\cap C_n)$.
And since $A_n$ and $B_n$ are monotone increasing, by FKG inequality~\cite{Frieze}[Section 21.3] we have $P(B_n\cap C_n)\geq P(B_n)P(C_n)$. So, to confirm our claim, it is enough to show that the both of $P(B_n)$ and $P(C_n)$ tends to $1$ as $n$ goes to infinity. The first one is an easy task as $n^{-\frac{2}{r-1}}$ is a threshold\cite{Frieze}[Theorem 5.3] for appearing $K_{r-1}$ in $G(n,p)$. To prove the other one, letting $\overline{C_n}$ the event that $C_n$ does not occur. Now, we show that $P(\overline{C_n})$ goes to zero as $n$ goes to infinity.  For any fixed set $A\subseteq [n]$ of size $r-1$, let $X_{A}$ be the number of common neighbors of $A$ in $G(n,\frac{1}{2})$. The probability that some other vertex of $G(n,p)$ is adjacent to all element of $A$ is $\frac{1}{2^{r-1}}$. Also, note that $X_A$ has binomial distribution with parameters $(n-(r-1),\frac{1}{2^{r-1}})$. In particular, $\mu =E(X_A)=\frac{n-(r-1)}{2^{r-1}}$. Chose $0 <\delta < 1$ in a way that $\alpha\leq (1-\delta)\mu$. Now, using Chernoff bound~\cite{Frieze}[Corollary 21.7], we have
$$P(X_A\leq\alpha)\leq P(X_A\leq(1-\delta)\mu)\leq e^{-\frac{\delta^2\mu}{2}}$$
Thus,
\begin{equation*} 
\begin{split}
 P(\overline{C_n})  & \leq \sum_{A\subseteq [n], |A|=r-1}P(X_A\leq (1-\delta)E(X_A))\\
& \leq\binom{n}{r-1}e^{-\frac{\delta^2\mu}{2}}\\
& \leq n^{r-1}e^{-\delta^2\frac{n-(r-1)}{2^r}}\to 0\quad\text{as}\quad n\to\infty 
\end{split}
\end{equation*}
Therefore, for any $0< \beta < 1$, $K_{\frac{\beta n}{2^{r-1}},\underbrace{1,\ldots,1}_{r-1}}$ appears in $G(n, \frac{1}{2})$ with high probability. 
\vspace{0.2cm}

We will finish the paper by mentioning two other applications of strong compatibility graphs.
\subsection{Application I: Limitations of topological bounds}
In fact, the motivation behind the definition of Hom-complexes goes back to a well-known conjecture of Kneser about the chromatic number of special family of graphs, called Kneser graphs. Surprisingly, this conjecture was solved via introducing a topological lower bound for the chromatic number of graphs by Lov\'{a}sz~\cite{Lovasz}. Actually, the Lov\'{a}sz bound in the setting of $\mathbb{Z}_2$-index is as follows:
\begin{theorem}[Lov\'{a}sz~\cite{Lovasz}]
For every graph $H$ we have
$$\chi(H)\geq Ind_{\mathbb{Z}_2}||Hom(K_2, H)||+2.$$
\end{theorem}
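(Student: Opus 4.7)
The plan is a functoriality-plus-monotonicity argument, which is the standard route to the Lov\'asz bound and fits neatly into the framework already set up in this paper. Let $n = \chi(H)$ and fix a proper $n$-coloring of $H$, regarded as a graph homomorphism $f \colon H \to K_n$. The assignment $Hom_p(K_2, -)$ is functorial in the second argument, so $f$ induces a poset morphism
$$f_\ast \colon Hom_p(K_2, H) \longrightarrow Hom_p(K_2, K_n), \qquad (A_1, A_2) \longmapsto (f(A_1), f(A_2)).$$
Because edges of $H$ are carried to edges of $K_n$, $f_\ast$ is well-defined; because the $\mathbb{Z}_2$-action is the swap of the two coordinates in both source and target, it is equivariant. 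Passing to order complexes and geometric realizations yields a continuous $\mathbb{Z}_2$-equivariant map $||Hom(K_2, H)|| \to ||Hom(K_2, K_n)||$, and part $(1)$ of Proposition $3$ then gives
$$Ind_{\mathbb{Z}_2} ||Hom(K_2, H)|| \leq Ind_{\mathbb{Z}_2} ||Hom(K_2, K_n)||.$$

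The second ingredient is the computation $Ind_{\mathbb{Z}_2} ||Hom(K_2, K_n)|| = n - 2$. Encoding a pair $(A_1, A_2)$ as the sign vector in $\{-, 0, +\}^n$ with $+$ on $A_1$, $-$ on $A_2$, and $0$ elsewhere, the poset $Hom_p(K_2, K_n)$ becomes the subposet of the face lattice of the $n$-dimensional cross-polytope consisting of faces that use at least one positive and at least one negative vertex, with the swap of $K_2$ realizing the antipodal involution. Its order complex is $\mathbb{Z}_2$-homotopy equivalent to the antipodal $(n-2)$-sphere; equivalently, $Hom(K_2, K_n)$ $\mathbb{Z}_2$-deformation retracts onto the neighborhood complex of $K_n$, which is the boundary of the $(n-1)$-simplex on $\{1, \ldots, n\}$ with the antipodal action. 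Parts $(2)$ and $(3)$ of Proposition $3$ then give $Ind_{\mathbb{Z}_2} ||Hom(K_2, K_n)|| = n - 2$.

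Combining the two inequalities, $Ind_{\mathbb{Z}_2} ||Hom(K_2, H)|| \leq n - 2$, i.e., $\chi(H) \geq Ind_{\mathbb{Z}_2} ||Hom(K_2, H)|| + 2$, which is the claim. The main obstacle is the equivariant sphere identification in the second step, since the poset in question is not the full face poset of the cross-polytope but only the "mixed sign" part, so the deformation retraction onto the neighborhood complex has to be set up carefully and checked to be $\mathbb{Z}_2$-equivariant. A purely combinatorial alternative, internal to the present paper, would be to invoke Theorem $8$: it suffices to exhibit a proper $(n-1)$-coloring of the strong compatibility graph $\widetilde{C}_{Hom_p(K_2, K_n)}$ in order to conclude $Ind_{\mathbb{Z}_2} ||Hom(K_2, K_n)|| \leq n - 2$ directly, bypassing any explicit homotopy computation.
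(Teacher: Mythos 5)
First, a point of reference: the paper does not actually prove this statement --- Theorem 15 is quoted from Lov\'asz's article as background for Section 4.1 --- so there is no internal proof to compare yours against, and your argument must be judged on its own merits. On those merits it is correct, and it is the standard functoriality proof: the induced map $f_\ast$ is indeed well defined (properness of the coloring is exactly what forces $f(A_1)\cap f(A_2)=\emptyset$ and the complete bipartite condition in $K_n$), order-preserving, and equivariant for the coordinate swap, so Proposition 3(1) yields $Ind_{\mathbb{Z}_2}||Hom(K_2,H)||\leq Ind_{\mathbb{Z}_2}||Hom(K_2,K_n)||$. Note that you only need the upper bound $Ind_{\mathbb{Z}_2}||Hom(K_2,K_n)||\leq n-2$, not the equality, and the ``main obstacle'' you flag in the sphere identification can be bypassed entirely by your own suggested alternative, which the paper's machinery makes immediate: $Hom_p(K_2,K_n)$ is the poset of pairs of disjoint nonempty subsets of $[n]$, so its longest chains have $n-1$ elements and $\dim\left(Hom_p(K_2,K_n)\right)=n-2$; it is a finite free $\mathbb{Z}_2$-poset (disjointness rules out $(A_1,A_2)=(A_2,A_1)$), so Lemma 7 supplies the proper $(n-1)$-coloring of $\widetilde{C}_{Hom_p(K_2,K_n)}$ you asked for, and Theorem 8 gives $Ind_{\mathbb{Z}_2}||Hom(K_2,K_n)||\leq\chi\left(\widetilde{C}_{Hom_p(K_2,K_n)}\right)-1\leq n-2$ with no homotopy computation at all. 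One genuine inaccuracy in your topological route should be fixed if you keep it: the neighborhood complex of $K_n$ is the boundary of an $(n-1)$-simplex, which carries no free simplicial involution, so ``deformation retracts onto the neighborhood complex with the antipodal action'' is not literally meaningful as stated. The correct formulation is that the mixed-sign part of the boundary of the cross-polytope (the sphere $\mathbb{S}^{n-1}$ minus the open stars of the two pure-sign facets, i.e.\ minus two disjoint open balls) equivariantly deformation retracts onto its equatorial $(n-2)$-sphere, or one can simply cite the known fact that $Hom(K_2,K_n)$ is $\mathbb{Z}_2$-homotopy equivalent to the antipodal $\mathbb{S}^{n-2}$ (Babson--Kozlov, or Csorba's work on box complexes). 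With either repair, your proof is complete.
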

However, this bound is tight for Kneser graphs. But, we cannot expect that it always works well. For example, it is known that if a graph $H$ does not contains a copy of complete bipartite graph $K_{l,m}$, then its Lov\'{a}sz bound is at most $l+m-1$; see the $K_{l,m}$ theorem in~\cite{Csorba}. While, the chromatic number of such graphs can be arbitrary large~\cite{erdos}. As another evidence, M. Kahle~\cite{Kahle} showed that for any $\epsilon > 0$, the following inequality holds almost surly
$$Ind_{\mathbb{Z}_2} ||Hom(K_2, G(n,\frac{1}{2}))||\leq (4+\epsilon)\log_2 n.$$ 
While, for comparison, the chromatic number of $G(n,\frac{1}{2})$ is tightly concentrated around $\frac{n}{2log_2 n}$~(\cite{bollobas})! Later, the Lov\'{a}sz bound was generalized to all complete graphs by Babson and Kozlov.
\begin{theorem}[\cite{Kozlov}]
For every graph $H$ we have
$$\chi(H)\geq Ind_{\mathbb{Z}_2}||Hom(K_r, H)||+r.$$
\end{theorem}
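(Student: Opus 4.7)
The plan is to factor a proper coloring of $H$ through a complete graph and then bound the residual $\mathbb{Z}_2$-index using Lemma 7 and Theorem 8 of the paper. Let $n=\chi(H)$ and fix a proper $n$-coloring, viewed as a graph homomorphism $c\colon H\to K_n$. Functoriality of $Hom_p(K_r,-)$ produces an order-preserving map of posets
\begin{equation*}
c_{\ast}\colon Hom_p(K_r,H)\longrightarrow Hom_p(K_r,K_n),\qquad (A_1,\ldots,A_r)\longmapsto (c(A_1),\ldots,c(A_r)),
\end{equation*}
which is well defined since properness of $c$ forces $c(A_i)\cap c(A_j)=\emptyset$ whenever $\{i,j\}\in E(K_r)$. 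I would equip both posets with the $\mathbb{Z}_2$-action swapping the first two coordinates (the natural extension of the involution appearing in Lov\'asz's setup at $r=2$). The map $c_{\ast}$ is $\mathbb{Z}_2$-equivariant, and the action is free on both sides, because a hypothetical fixed point $(A,A,A_3,\ldots,A_r)$ would, via the edge $\{1,2\}\in E(K_r)$, force some $\{x,x\}$ to be an edge of a simple graph. Passing to order complexes, part~(1) of Proposition 3 gives
\begin{equation*}
Ind_{\mathbb{Z}_2}\,||Hom(K_r,H)||\;\leq\;Ind_{\mathbb{Z}_2}\,||Hom(K_r,K_n)||,
\end{equation*}
so the task reduces to establishing $Ind_{\mathbb{Z}_2}\,||Hom(K_r,K_n)||\leq n-r$.

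For this residual bound I would invoke the paper's own chain of inequalities. The poset $P:=Hom_p(K_r,K_n)$ is a free $\mathbb{Z}_2$-poset by the previous paragraph, and combining Theorem 8 with Lemma 7 yields
\begin{equation*}
Ind_{\mathbb{Z}_2}\,||Hom(K_r,K_n)||+1\;\leq\;\chi(\widetilde{C}_P)\;\leq\;\dim(P)+1.
\end{equation*}
Thus it suffices to verify that $\dim Hom_p(K_r,K_n)=n-r$. A strictly increasing chain in $Hom_p(K_r,K_n)$ enlarges the ordered tuple of pairwise disjoint non-empty subsets of $[n]$ one element at a time. Its minimal elements are $r$-tuples of singletons (using exactly $r$ elements of $[n]$) and its maximal elements are ordered partitions of $[n]$ into $r$ non-empty parts (using all $n$ elements); consequently every longest chain has precisely $n-r$ covering relations. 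Chaining the two displayed inequalities delivers $Ind_{\mathbb{Z}_2}\,||Hom(K_r,H)||\leq n-r$, i.e., $\chi(H)\geq Ind_{\mathbb{Z}_2}\,||Hom(K_r,H)||+r$, as claimed.

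The only non-trivial point I foresee is the correct identification of the $\mathbb{Z}_2$-action intended by the theorem statement and the verification that this action is free on $Hom_p(K_r,K_n)$; the swap of two distinguished vertices of $K_r$ behaves well for this purpose, as it specialises to the Lov\'asz involution at $r=2$ and is automatically fixed-point-free on Hom-posets of simple graphs. Everything else---equivariance of $c_{\ast}$, the length of a longest chain in $Hom_p(K_r,K_n)$, and the final string of inequalities---is a short, essentially mechanical application of results already established earlier in the paper.
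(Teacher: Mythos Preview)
The paper does not supply its own proof of this statement: Theorem~16 is quoted from \cite{Kozlov} (Babson--Kozlov) and used only as background for the discussion in Section~3.1. There is therefore no ``paper's proof'' to compare against.

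That said, your argument is correct and is in fact a pleasant illustration of how the paper's machinery can recover the Babson--Kozlov bound. The reduction step---pushing forward along a proper coloring $c\colon H\to K_n$ to obtain a $\mathbb{Z}_2$-equivariant poset map $c_\ast\colon Hom_p(K_r,H)\to Hom_p(K_r,K_n)$, with $\mathbb{Z}_2$ acting by swapping two fixed coordinates---is the standard one, and your freeness check (a fixed point would force $A_1=A_2$, contradicting the edge $\{1,2\}$ in $K_r$) is fine. The computation $\dim Hom_p(K_r,K_n)=n-r$ is also correct: maximal chains run from an $r$-tuple of singletons to an ordered partition of $[n]$, adding one element at a time.

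Two minor remarks. First, routing the final bound through Theorem~8 and Lemma~7 is unnecessary: once you know $Hom_p(K_r,K_n)$ is a free $\mathbb{Z}_2$-poset of dimension $n-r$, the inequality $Ind_{\mathbb{Z}_2}\leq\dim$ for free simplicial $G$-complexes (implicit in the definition of $\mathbb{E}_nG$ and Proposition~3) already gives $Ind_{\mathbb{Z}_2}\,\|Hom(K_r,K_n)\|\leq n-r$ directly. Second, your caveat about the $\mathbb{Z}_2$-action is well placed: the paper itself only spells out the cyclic $\mathbb{Z}_r$-action on $Hom_p(K_r,-)$, but the transposition action you chose is the one used in the Babson--Kozlov literature and specialises to the Lov\'asz involution at $r=2$, so it is the intended one.
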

Now, as a direct corollary of Theorem $8$ and Lemma $14$ we have the following generalization of the $K_{l,m}$ theorem (for the case $l=m$).
\begin{corollary}
Let $H$ be a graph and $r\geq2$ be a positive integer. If\hspace{0.2cm}$H$ contains no copy of $K_{\underbrace{t,\ldots, t}_{r}}$, then 
$$Ind_{\mathbb{Z}_r}||Hom (K_r, H)||\leq\left(\binom{r}{2}+1\right)(t-1)-1.$$
\end{corollary}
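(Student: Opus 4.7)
The plan is to derive the corollary as a mechanical combination of Theorem $8$ and Lemma $14$, applied to the $\mathbb{Z}_r$-poset $P = Hom_p(K_r, H)$ equipped with the cyclic shift action described just before Theorem $13$. First I would verify, as a preliminary sanity check, that this action is indeed free: if some nontrivial $\omega^i \in \mathbb{Z}_r$ were to fix a tuple $(A_1,\ldots,A_r) \in Hom_p(K_r, H)$, then $A_1 = A_{1 + i\,(\mathrm{mod}\,r)}$, and since $\{1, 1+i\,(\mathrm{mod}\,r)\}$ is an edge of $K_r$, the Hom-poset compatibility condition would force $\{x,x\} \in E(H)$ for every $x \in A_1$, which is impossible in a simple graph. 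Hence Theorem $8$ applies to $P$.

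Next I would invoke Theorem $8$ to obtain
$$ind_{\mathbb{Z}_r} ||\Delta(Hom_p(K_r, H))|| + 1 \leq \chi\left(\widetilde{C}_{Hom_p(K_r, H)}\right).$$
By the very definition of the Hom-complex as the order complex of $Hom_p(K_r, H)$, the topological space on the left is just $||Hom(K_r, H)||$, so the left-hand side equals $Ind_{\mathbb{Z}_r}||Hom(K_r, H)|| + 1$. Since by hypothesis $H$ contains no copy of $K_{t,\ldots,t}$ with $r$ parts of size $t$, Lemma $14$ bounds the right-hand side from above by $\left(\binom{r}{2}+1\right)(t-1)$. Combining these two inequalities and subtracting $1$ from both sides yields precisely the claimed estimate
$$Ind_{\mathbb{Z}_r}||Hom(K_r, H)|| \leq \left(\binom{r}{2}+1\right)(t-1) - 1.$$

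Since both ingredients are already available, there is no real obstacle here: the corollary is essentially bookkeeping. The only moving parts are confirming freeness of the cyclic $\mathbb{Z}_r$-action on $Hom_p(K_r, H)$ (which is already noted earlier in the paper) and aligning the notation of Lemma $14$ with the statement being proved. Everything else follows by a direct substitution.
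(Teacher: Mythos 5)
Your proposal is correct and is exactly the paper's argument: the corollary is stated there as a direct consequence of Theorem 8 applied to the free $\mathbb{Z}_r$-poset $Hom_p(K_r,H)$ together with the bound $\chi\bigl(\widetilde{C}_{Hom_p(K_r,H)}\bigr)\leq\left(\binom{r}{2}+1\right)(t-1)$ from Lemma 14. Your explicit verification that the cyclic shift action is free (via the loop $\{x,x\}\in E(H)$ contradiction) is a detail the paper asserts without proof, so including it only strengthens the write-up.
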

As another corollary of Theorem 8 and Theorem 13, we can generalize the Kahle result to all complete graphs as follows. 
\begin{corollary}
For any real number $\epsilon > 0$ and any natural number $r\geq 2$ the following inequality 
\[
    Ind_{\mathbb{Z}_r}||Hom (K_r, G(n,\frac{1}{2}))||\leq\left(r+\frac{2}{r-1}+ \epsilon\right)\log_2 n
\]        
holds almost surely. 
\end{corollary}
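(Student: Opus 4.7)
The plan is to deduce the corollary in one step by chaining Theorem 8 with Theorem 13; all of the real work has already been done in those two statements. The bridge is the observation made in the paragraph preceding Theorem 13 that $Hom_p(K_r, H)$ is a free $\mathbb{Z}_r$-poset under the cyclic-shift action, together with the tautological identification $||Hom(K_r, H)|| = ||\Delta(Hom_p(K_r, H))||$ coming from the definition of the Hom-complex.

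First, I would apply Theorem 8 to the free $\mathbb{Z}_r$-poset $P = Hom_p(K_r, G(n, 1/2))$, which gives
\[
Ind_{\mathbb{Z}_r}\,||Hom(K_r, G(n, 1/2))|| \;\leq\; \chi\!\left(\widetilde{C}_{Hom_p(K_r, G(n, 1/2))}\right) - 1.
\]
This step is deterministic on the realization of the random graph, except for the mild caveat that $P$ must be nonempty. But for any fixed $r \geq 2$, the expected number of copies of $K_r$ inside $G(n, 1/2)$ is $\binom{n}{r} 2^{-\binom{r}{2}}\to\infty$, so $G(n, 1/2)$ contains a $K_r$ almost surely, and hence $P$ is almost surely nonempty; whenever it is empty the index inequality is vacuous and nothing is lost.

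Second, I would invoke Theorem 13, which asserts that with high probability
\[
\chi\!\left(\widetilde{C}_{Hom(K_r, G(n, 1/2))}\right) \;\leq\; \left(r + \frac{2}{r-1} + \epsilon\right)\log_2 n,
\]
and then chain the two inequalities; the slack of $-1$ coming from Theorem 8 only tightens the bound and may be discarded. I do not anticipate any genuine obstacle: all probabilistic content is already packaged inside Theorem 13, and Theorem 8 provides the purely topological passage from the chromatic number of the strong compatibility graph to the $\mathbb{Z}_r$-index of the Hom-complex. The only routine bookkeeping is that the high-probability event furnished by Theorem 13 and the almost-sure nonemptiness of $P$ must hold simultaneously, which is automatic by a union bound since each has probability tending to $1$ as $n \to \infty$.
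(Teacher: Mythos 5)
Your proposal is correct and is exactly the paper's route: the paper states this corollary with no written proof precisely because it follows by chaining Theorem 8 (applied to the finite free $\mathbb{Z}_r$-poset $Hom_p(K_r, G(n,\frac{1}{2}))$) with the high-probability bound of Theorem 13. Your extra remarks on nonemptiness of the poset and intersecting the two high-probability events are harmless added care that the paper leaves implicit.
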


\subsection{Application II: Constructing triangle-free high chromatic number graphs from spheres}
As a final application, we shall explain how one can construct triangle-free graphs with high chromatic number from the $n$-sphere $\mathbb{S}^{n}$! let $\mathbb{Z}_2=\{+1, -1\}$ be the cyclic group of order $2$.
\begin{lemma}
Let $P$ be a free $\mathbb{Z}_2$-poset. The compatibility graph $C_P$ of $P$ is triangle-free if and only if there are no $x, y\in P$ such that $x\preceq y$ and $x\preceq -y$.
\end{lemma}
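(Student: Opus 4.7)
The plan is to prove the equivalence by establishing both implications, in each case via the contrapositive. The easy half is to show that a bad configuration gives rise to a triangle in $C_P$: from $x, y \in P$ with $x \preceq y$ and $x \preceq -y$ I would exhibit the triangle $\{x, y, -y\}$. Pairwise distinctness follows from freeness together with the order-preservation of the $\mathbb{Z}_2$-action; for instance, $x = y$ forces $x \preceq -x$, which on applying the action yields $-x \preceq x$ and hence $x = -x$ by antisymmetry, contradicting freeness (the case $x = -y$ is analogous, and $y \neq -y$ is immediate from freeness). The three edges of the triangle are then automatic: $\{x, y\}$ because $x$ is comparable to $-y$, $\{x, -y\}$ because $x$ is comparable to $-(-y) = y$, and $\{y, -y\}$ because $y$ is trivially comparable to $-(-y) = y$.

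The harder half---extracting a bad configuration from a triangle $\{a, b, c\}$ in $C_P$---I would organize by packaging the three comparabilities into a tournament $T$ on $\{a, b, c\}$: declare $u \to v$ whenever $u \preceq -v$ in $P$ (equivalently, by applying the action, $-u \preceq v$). Each edge of the triangle supplies at least one valid orientation, so $T$ is well-defined; if an edge lies within an antipodal orbit then both orientations hold trivially and I simply pick one. The analysis then splits on whether $T$ is transitive or cyclic. If $T$ is transitive with vertex order $u_1 > u_2 > u_3$, the recorded relations are $u_1 \preceq -u_2$, $u_1 \preceq -u_3$, and $u_2 \preceq -u_3$; applying the $\mathbb{Z}_2$-action to $u_2 \preceq -u_3$ yields $-u_2 \preceq u_3$, so chaining $u_1 \preceq -u_2 \preceq u_3$ gives $u_1 \preceq u_3$, and combined with $u_1 \preceq -u_3$ this is the desired bad configuration $(u_1, u_3)$. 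If instead $T$ is cyclic, say $u_1 \to u_2 \to u_3 \to u_1$, the same chain step still yields $u_1 \preceq u_3$, and concatenating with $u_3 \preceq -u_1$ (the third edge of the cycle) produces $u_1 \preceq -u_1$, contradicting freeness; so the cyclic case cannot occur, and we are always in the transitive case.

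The main obstacle is the degenerate situation in which one edge of the triangle lies within an antipodal orbit (i.e., some vertex of the triangle is the antipode of another), because then the tournament orientation on that edge is ambiguous and several of the chained relations degenerate into trivial identities. One must check that, regardless of which orientation is chosen for the ambiguous edge, the transitive case still produces a genuine bad configuration and the cyclic case still collapses to the freeness-violating relation $u_1 \preceq -u_1$; this is a short case check but is the part of the argument that requires the most care.
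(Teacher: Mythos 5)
Your proof is correct and takes essentially the same route as the paper's: your tournament dichotomy (transitive vs.\ cyclic) is exactly the paper's WLOG reduction to its Case II and Case I, with the same chaining $u_1\preceq -u_2\preceq u_3$ and the same freeness contradiction from $u_1\preceq -u_1$ in the cyclic case, and the easy direction is identical. The degenerate antipodal-edge situation you flag actually requires no extra care: the transitive case yields $x\preceq y$ and $x\preceq -y$ verbatim (the statement does not require $x\neq y$ or distinct orbits), and the cyclic case collapses to $u_1\preceq -u_1$ no matter which orientation is chosen.
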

\begin{proof}
Suppose there exist $x, y\in P$ such that $x\preceq y$ and $x\preceq -y$. First, we show that $x$, $y$, and $-y$ are distinct elements of $P$. First of all $y\neq -y$ as $P$ is free. Also $x\neq -y$, since otherwise $-y\preceq y$ as $x\preceq y$. By multiplication of both sides of the inequality $-y\preceq y$ by $-1$, we get $y\preceq -y$ as well. Thus, by transitivity of $\preceq$, $y=-y$ which contradicts the fact that $P$ is a free $\mathbb{Z}_2$-poset. Similarly, $x\neq y$. Therefore, they are pairwise distinct. Moreover, $x\sim y$, $x\sim -y$, and $y\sim -y$ as $x\preceq -y$, $x\preceq y$, and $y\preceq y$, respectively. Therefore $x$, $y$, and $-y$ form a triangle in $C_P$.

For the other side, assume that $x, y,$ and $z$ form a triangle in $C_P$. Since $x$ is connected to $y$, $x$ must be comparable by $-y$, i.e, $x\preceq -y$ or $y\preceq -x$. We have the similar conditions for the other pairs; $x, z$, and $y, z$. Without loss of generality, we can just consider two cases.
\\
\textbf{Case I:}
\[   
     \begin{cases}
       x\preceq -y\\
       y\preceq -z\\
       z\preceq -x\
     \end{cases}.
\]
Therefore, by transitivity of $\preceq$, we have $x\preceq -x$. Thus, $x=-x$, in the same way as in the first part. This contradicts the fact that $P$ is a free $\mathbb{Z}_2$-poset.
\\
\textbf{Case II:}
\[   
     \begin{cases}
       x\preceq -y\\
       x\preceq -z\\
       y\preceq -z\
     \end{cases}.
\]
The first and third inequalities imply that $x\preceq z$. On the other hand, by the second inequality, we have also $x\preceq -z$ which contradicts our assumptions.
\end{proof}
It is easy to check that the face poset $P(\mathcal{K})$ of any free $\mathbb{Z}_2$-simplicial complex $\mathcal{K}$ satisfies in the conditions of Lemma $19$. Therefore, $C_{P(\mathcal{K})}$ is a triangle-free graph. Consequently, the strong compatibility graph $\widetilde{C}_{P(\mathcal{K})}$ is triangle free as $\widetilde{C}_{P(\mathcal{K})}$ is a sub-graph of $C_{P(\mathcal{K})}$. Moreover, Lemma 7 and Theorem 8 imply $\chi\left(\widetilde{C}_{P(\mathcal{K})}\right)=n+1$, if $\dim(K)=Ind_{\mathbb{Z}_2}(||K||)=n$. In particular, we have the following corollary as $Ind_{\mathbb{Z}_2}\mathbb{S}^n= n$.
\begin{corollary}[Triangle-free graphs with high chromatic number]
If $\mathcal{K}$ is a free $\mathbb{Z}_2$-simplicial complex which triangulates the $n$-sphere $\mathbb{S}^n$,i.e, $||\mathcal{K}||\cong\mathbb{S}^n$, then $\widetilde{C}_{P(\mathcal{K})}$ is a triangle-free graph with $\chi (\widetilde{C}_{P(\mathcal{K})})=n+1$.
\end{corollary}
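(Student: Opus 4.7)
The plan is to assemble three ingredients that the paper has already made available: Lemma 19 for the triangle-free conclusion, Lemma 7 for the upper bound on $\chi(\widetilde{C}_{P(\mathcal{K})})$, and Theorem 8 for the matching lower bound. The whole argument runs through the face poset $P(\mathcal{K})$, so the main preliminary observation is that the free $\mathbb{Z}_2$-action on $\mathcal{K}$ makes $P(\mathcal{K})$ a free $\mathbb{Z}_2$-poset whose order complex $\Delta(P(\mathcal{K}))$ has geometric realization $\mathbb{Z}_2$-homeomorphic to $||\mathcal{K}||\cong\mathbb{S}^n$ (via barycentric subdivision).

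First I would verify that $P(\mathcal{K})$ satisfies the hypothesis of Lemma 19. Suppose $\sigma\preceq\tau$ and $\sigma\preceq -\tau$ in $P(\mathcal{K})$. Then $\sigma$ is a nonempty simplex contained in both $\tau$ and $-\tau$, so $\tau\cap(-\tau)$ is a nonempty simplex of $\mathcal{K}$ that is fixed setwise by $-1$, contradicting the freeness of the $\mathbb{Z}_2$-action on $\mathcal{K}$. Hence no such pair exists, and Lemma 19 tells us that $C_{P(\mathcal{K})}$ is triangle-free. Since the strong compatibility graph $\widetilde{C}_{P(\mathcal{K})}$ is a spanning subgraph of $C_{P(\mathcal{K})}$ (every edge of $\widetilde{C}_{P(\mathcal{K})}$ satisfies the adjacency condition defining $C_{P(\mathcal{K})}$), it is triangle-free as well.

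Next I would pin down the chromatic number. The upper bound is immediate from Lemma 7: $\chi(\widetilde{C}_{P(\mathcal{K})})\leq\dim(P(\mathcal{K}))+1=\dim(\mathcal{K})+1=n+1$, since $\mathcal{K}$ triangulates $\mathbb{S}^n$. For the lower bound I would invoke Theorem 8, which gives $\chi(\widetilde{C}_{P(\mathcal{K})})\geq ind_{\mathbb{Z}_2}||\Delta(P(\mathcal{K}))||+1$. Using the $\mathbb{Z}_2$-homeomorphism $||\Delta(P(\mathcal{K}))||\cong||\mathcal{K}||\cong\mathbb{S}^n$ together with the standard fact $ind_{\mathbb{Z}_2}\mathbb{S}^n=n$, this becomes $\chi(\widetilde{C}_{P(\mathcal{K})})\geq n+1$. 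Combining the two inequalities yields the desired equality $\chi(\widetilde{C}_{P(\mathcal{K})})=n+1$.

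The proof is essentially a packaging exercise, so there is no real obstacle; the one place that deserves care is the first paragraph, namely checking that the freeness of the $\mathbb{Z}_2$-action on $\mathcal{K}$ really does preclude the configuration $\sigma\preceq\tau$, $\sigma\preceq -\tau$ in $P(\mathcal{K})$, and that the order-complex identification $||\Delta(P(\mathcal{K}))||\cong_{\mathbb{Z}_2}||\mathcal{K}||$ is $\mathbb{Z}_2$-equivariant so that Theorem 8 may be applied on the sphere side.
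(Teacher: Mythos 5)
Your proof is correct and takes essentially the same route as the paper: Lemma 19 (whose hypothesis you verify for the face poset) gives triangle-freeness of $C_{P(\mathcal{K})}$ and hence of its subgraph $\widetilde{C}_{P(\mathcal{K})}$, Lemma 7 gives the upper bound $\dim(\mathcal{K})+1=n+1$, and Theorem 8 combined with the $\mathbb{Z}_2$-homeomorphism $||\Delta(P(\mathcal{K}))||\cong||\mathcal{K}||\cong\mathbb{S}^n$ and $ind_{\mathbb{Z}_2}\,\mathbb{S}^n=n$ gives the matching lower bound. Your explicit check of Lemma 19's hypothesis via the setwise-invariant nonempty simplex $\tau\cap(-\tau)$ merely fills in a detail the paper dismisses as ``easy to check.''
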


\section*{Acknowledgements}
I wish to express my most sincere gratitude to Professors Hossein Hajiabolhassan, and Roman Karasev, for his kind guidance, helpful comments and suggestions. A special thanks goes to Professors Maksim Zhukovskii and Omid Etesami for fruitful discussions about Random graphs. Finally, I would also like to thank Professor Andrey Raigorodskii for his generous support and encouragement. 

\section*{References}

\end{document}